\documentclass[a4paper,latexcad,twoside,11pt]{article}
\usepackage[mathscr]{euscript}
\usepackage{xcolor}
\usepackage{amsthm}
\usepackage{amsmath}
\usepackage{stmaryrd} 
\usepackage{amssymb}
\usepackage{makeidx}
\usepackage{indentfirst}
\usepackage{amsfonts}
\usepackage{bm}
\usepackage{cases}
\usepackage{graphicx} 
\usepackage{caption}   
\usepackage{subcaption}
\usepackage{hyperref}
\usepackage{enumitem}
\usepackage{float}
\usepackage{mathtools} 
\usepackage[a4paper]{geometry}
\title{The DP color function of joins and vertex-gluing of hypergraphs}
\author{ }

\date{}

\newtheorem{theorem}{Theorem}
\newtheorem{lemma}[theorem]{Lemma}
\newtheorem{corollary}[theorem]{Corollary}
\newtheorem{proposition}[theorem]{Proposition}
\newtheorem{definition}{Definition}

\newcommand{\brk}[1]{\llbracket #1 \rrbracket}

\begin{document}
	
	\author{Ruiyi Cui$^1$, 
		Liangxia Wan$^1$\thanks{
			Emails: lxwan@bjtu.edu.cn. },  
		Fengming Dong$^2$\thanks{
			Email: fengming.dong@nie.edu.sg}
		\\
		\small $^1$School of Mathematics and Statistics,
		Beijing Jiaotong University, Beijing $100044$, China
		\\
		\small $^2$National Institute of Education, Nanyang Technological University, Singapore
	}
	
	\maketitle
	
	\begin{abstract}
		The DP color function $P_{DP}(\mathcal{H}, k)$ of a hypergraph $\mathcal{H}$ is defined as the minimum possible value of $P_{DP}(\mathcal{H}, \mathcal{F})$ taken over all $k$-fold covers $\mathcal{F}$ of $\mathcal{H}$. It serves as an extension of the classical chromatic polynomial. In this paper, we investigate the DP color function for two constructions: the join of a hypergraph with a complete graph and vertex-gluings of certain hypergraphs. Both constructions extend some results of Becker et al.
	\end{abstract}
	\vskip 2mm
	\noindent{\small{\bf Mathematics Subject Classiﬁcation} 05C15, 05C30, 05C65}
	\vskip 2mm
	\noindent{\bf keywords} Chromatic polynomial$\cdot$ Hypergraph coloring$\cdot$ DP-coloring$\cdot$ DP color function

	\section{Introduction}
	In this paper, each hypergraph is nonempty, finite and simple unless otherwise noted. 
	Let $\mathbb{N}$ and $\brk{k} $ denote the set of all positive integers and the $k$-element set $\{1,2,..,k\}$, respectively. 
	A hypergraph $\mathcal{H} = (X, E)$ consists of a finite non-empty set $X$ and a subset $E$ of $2^X$ (i.e., the power set of $X$) with $|e| \geq 2$ for each $e \in E$. 
	A hypergraph $\mathcal{H}$ is \textit{$r$-uniform} if  $|e| = r$ for each edge $e \in E$. 
	A hypergraph is \textit{linear} if every pair of distinct edges intersects in at most one vertex. 
	A cycle $C$ of length $p$ in $\mathcal{H}$ is a subhypergraph consisting of $p$ distinct vertices $v_1, e_2, ..., v_p$ and $p$ distinct edges $e_1, e_2, \ldots, e_p$ such that $v_{i-1}, v_{i} \in e_i$ for each $i \in \brk{p}$ (indices are taken modulo $p$). 
	A \textit{hypertree} is a connected linear hypergraph without cycles. 
	A \textit{unicyclic hypergraph} is a connected hypergraph containing exactly one cycle. 
	A hypergraph $\mathcal{H}= (V,E)$ is \textit{complete} if $E = P(V) \setminus \{ \emptyset \}$ where $P(V)$ denotes the powerset of $V$. A subset $S\subseteq E(\mathcal{H})$ is also called a {\it partial hypergraph} of $\mathcal{H}$. 
	For a hypergraph $\mathcal{H}$, $\mathcal{H} \vee K_1$ is the hypergraph obtained from $\mathcal{H}$ by adding a new vertex $u$ and adding new edges $\{u,v\}$ for all vertices $v$ in $\mathcal{H}$.
	
	Helgason \cite{HE06} proposed the hypergraph chromatic polynomial for the first time in 1972. 
	For any positive integer $k$, a \textit{proper $k$-coloring} of the hypergraph $\mathcal{H}$ is a map $f\colon V(\mathcal{H})\to \brk{k}$ such that, for every edge $e\in E(\mathcal{H})$, there exist two distinct vertices $u,v\in e$ satisfying $f(u)\neq f(v)$. The number of proper $k$-colorings of $\mathcal{H}$ is a polynomial $P(\mathcal{H},k)$ in the variable $k$, of degree $|V(\mathcal{H})|$ in $k$, called \textit{chromatic polynomial} of $\mathcal{H}$.
	In the 1970s, the vertex coloring was independently generalized to the list coloring by Vizing \cite{VI76} on one side, and Erdős, Rubin, and Taylor \cite{ERT79} on the other. Later, in 2015, Dvořák and Postle \cite{DP18} generalized the list coloring to the DP-coloring (also called correspondence coloring) to prove that every planar graph without cycles of length $4$ to $8$ is $3$-choosable.  The DP-coloring has been extensively studied \cite{BHKMMSTW22,DY22,KM21,LLYY19, MP22,MU22}. Kaul and Mudrock \cite{KM21} introduced a DP color function of a graph. Becker et al consider the DP color function of joins and vertex-gluings of graphs and they obtained the following results.
	\begin{theorem}\cite{BHKMMSTW22}\label{result 1}
		Suppose \(P_{DP}(K_p \vee G, m) = P(K_p \vee G, m)\) whenever \(m \geq N\). Then, \(P_{DP}(K_{p+1} \vee G, s) = P(K_{p+1} \vee G, s)\) whenever \(s \geq N + 1\). Consequently, \(\tau_{DP}(K_{p+1} \vee G) \leq \tau_{DP}(K_p \vee G) + 1\).
	\end{theorem}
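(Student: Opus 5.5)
Write $H=K_p\vee G$, so that $K_{p+1}\vee G = H\vee K_1$, where the new vertex $u$ is adjacent to every vertex of $H$. The chromatic polynomial then satisfies $P(H\vee K_1,s)=s\,P(H,s-1)$. Indeed, color $u$ with any of the $s$ colors; a proper coloring of the remaining vertices must avoid $u$'s color, so it is a proper $(s-1)$-coloring of $H$. Since always $P_{DP}\le P$, it suffices to show $P_{DP}(H\vee K_1,\mathcal F)\ge s\,P(H,s-1)$ for every $s$-fold cover $\mathcal F=(L,M)$ of $H\vee K_1$ with $s\ge N+1$.

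Fix such a cover and let $L(u)=\{u_1,\dots,u_s\}$. For each $j$ we count the $\mathcal F$-colorings that use $u_j$. Each neighbor $v$ of $u$ has at most one vertex of $L(v)$ matched to $u_j$ in $M$. Deleting those matched vertices, and adding an arbitrary vertex wherever $L(v)$ lost nothing, gives an $(s-1)$-fold cover $\mathcal F_j$ of $H$. Concretely, let $L_j(v)$ be an $(s-1)$-subset of $L(v)\setminus N_M(u_j)$, and restrict $M$ to these lists. Every $\mathcal F_j$-coloring of $H$, together with $u_j$, is an $\mathcal F$-coloring of $H\vee K_1$. This holds because $u_j$ has no $M$-neighbor in $\bigcup_v L_j(v)$, and all edges among $H$-vertices are inherited from $M$.

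Colorings arising from different $j$ are distinct, since they choose different representatives of $u$. Hence
\[P_{DP}(H\vee K_1,\mathcal F)\ \ge\ \sum_{j=1}^s P_{DP}(H,\mathcal F_j)\ \ge\ s\,P_{DP}(H,s-1)= s\,P(H,s-1),\]
where the last equality is the hypothesis applied with $m=s-1\ge N$. Taking the minimum over $\mathcal F$ gives $P_{DP}(K_{p+1}\vee G,s)=P(K_{p+1}\vee G,s)$ for all $s\ge N+1$. The consequence $\tau_{DP}(K_{p+1}\vee G)\le\tau_{DP}(K_p\vee G)+1$ follows by taking $N=\tau_{DP}(K_p\vee G)$, the threshold beyond which equality holds.

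The main point requiring care is the construction of $\mathcal F_j$. We must check that it is a legitimate $(s-1)$-fold cover: lists have size exactly $s-1$, and the matchings between adjacent vertices of $H$ stay matchings, which is automatic for a restriction. We must also check that no extra constraints are lost, so the lower bound is not an overcount. Both hold because $M$ between $L(u)$ and $L(v)$ is a matching, so at most one element is removed from each $L(v)$; the arbitrary padding to size $s-1$ only restricts the choices further and so keeps the inequality valid. One also uses that the covers considered allow non-perfect matchings, so the restriction remains a cover in the sense of the definition.
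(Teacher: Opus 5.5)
Your argument is correct and is essentially the paper's proof (Lemma~\ref{Join-lemma1} applied to $K_p\vee G$, which is how Theorem~\ref{th1} is derived). You partition the colorings by the apex color $u_j$, note that those using $u_j$ include all colorings of an $(s-1)$-fold cover of $K_p\vee G$ obtained by removing one element from each list, and apply the hypothesis at $s-1\ge N$. Your count is in fact the more careful one: you correctly get $s$ summands, each at least $P(K_p\vee G,s-1)$, whereas the paper sums only $k$ terms for a $(k+1)$-fold cover. You also explicitly justify truncating the lists, though ``adding an arbitrary vertex'' should read ``deleting'', as your definition of $L_j(v)$ shows.
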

	
	\begin{theorem}\cite{BHKMMSTW22}\label{result 2}
		For any \(k, m \in \mathbb{N}\), \(P_{DP}(K_1 \vee C_{2k+1}, m) = P(K_1 \vee C_{2k+1}, m)\), and
		\[
		P_{DP}(K_1 \vee C_{2k+2}, m) = \begin{cases} 
			0 & \text{if } m \in [2] \\
			3 & \text{if } m = 3 \\
			P(K_1 \vee C_{2k+2}, m) & \text{if } m \ge 4.
		\end{cases}
		\]
	\end{theorem}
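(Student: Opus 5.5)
The plan is to condition on the colour of the hub and reduce to DP-colourings of the rim cycle. Write $W_n=K_1\vee C_n$ with hub $u$ and rim $v_1,\dots,v_n$. Recall that $P(W_n,m)=m\bigl[(m-2)^n+(-1)^n(m-2)\bigr]$ and that $P_{DP}(G,m)\le P(G,m)$ always holds, since the identity cover reproduces ordinary colourings. So in every case it remains to prove the lower bound, and, for $m=3$ with $n$ even, to exhibit a cover with exactly $3$ colourings.

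\textbf{Reduction to a cycle.} Let $\mathcal{F}=(L,M)$ be any $m$-fold cover of $W_n$ and fix a vertex $c\in L(u)$. Deleting from each $L(v_i)$ the (at most one) vertex matched to $c$ leaves a cover $\mathcal{F}_c$ of $C_n$ in which every list has at least $m-1$ vertices. Discarding extra vertices can only decrease the number of colourings, so we may assume all lists have size exactly $m-1$. Moreover, we may assume every matching is perfect: adding matching edges also only decreases the count. Then
\[
P_{DP}(W_n,\mathcal{F})=\sum_{c\in L(u)}P_{DP}(C_n,\mathcal{F}_c).
\]

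\textbf{Counting colourings of a cycle.} For a full $k$-fold cover of $C_n$, renaming the lists along the path $v_1\cdots v_n$ makes the first $n-1$ matchings identities, so the closing matching is a permutation $\sigma$ of $\brk{k}$. Computing $\operatorname{tr}\prod(J-P_i)$ by transfer matrices gives
\[
P_{DP}(C_n,\mathcal{F}_c)=(k-1)^n+(-1)^n\bigl(\mathrm{fix}(\sigma)-1\bigr),
\]
which equals $P(C_n,k)$ when $\sigma=\mathrm{id}$.

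For $n$ odd, the minimum over $\sigma$ is attained at $\mathrm{fix}(\sigma)=k$. With $k=m-1$, summing over the $m$ choices of $c$ gives at least $m[(m-2)^n-(m-2)]=P(W_n,m)$, which settles the odd case.

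\textbf{The even case.} The same sum gives
\[
m(m-2)^n+\sum_c\bigl(\mathrm{fix}(\sigma_c)-1\bigr),
\]
so we need $\sum_c \mathrm{fix}(\sigma_c)\ge m(m-1)$ when $m\ge4$. This is the main obstacle: the residual permutations $\sigma_c$ are not independent. All of them are obtained from the single closing permutation $\tau$ of the full $m$-fold rim cover by deleting the points matched to $c$ and re-splicing.

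My first attempt would be to normalise the hub matchings to identities and track how the deletion of $c$ changes the cycle structure of $\tau$. The deficiency $m-1-\mathrm{fix}(\sigma_c)$ should then be charged to nontrivial cycles of $\tau$, and I would hope to show that this defect is compensated by the colourings with $\mathrm{fix}(\sigma_c)$ large for other $c$. If this charging fails, the fallback is to stop conditioning on the hub. Instead, I would count pairs consisting of a rim colouring and a compatible hub colour directly, via inclusion–exclusion over the edges at $u$. That expresses the total as $m\cdot P_{DP}(C_n,\cdot)$ plus correction terms, and for $m\ge4$ I would check that these terms dominate the deficit of $-1$ per hub colour.

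\textbf{Small $m$.} For $m\le2$, note that $W_n$ contains a triangle, so $\chi_{DP}(W_n)\ge3$ and the value is $0$. For $m=3$ and $n$ even, the lower bound $3$ comes from the formula above: with $k=2$, each $\mathcal{F}_c$ has at least $1+\mathrm{fix}(\sigma_c)-1\ge0$ colourings. One then argues, by the parity of the $2$-cycle structure, that at least three of these contributions are positive in total. For the matching upper bound, take the cover whose rim closing permutation is a transposition on every residual list.
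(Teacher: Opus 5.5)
\textbf{Where the proof breaks.} Your odd case is correct. The cycle formula $(k-1)^n+(-1)^n(\mathrm{fix}(\sigma)-1)$ is right, completing residual matchings only lowers the count, and for odd $n$ the bound $\mathrm{fix}(\sigma)\le k$ already gives $P(C_n,m-1)$ per hub colour. The case $m\le 2$ is also fine. (The paper does not prove this theorem; it only quotes it from Becker et al. Its own hypergraph analogue is Lemma~\ref{lem_even_local} with Corollary~\ref{cor_even}.)

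The gap is the even case, and it is created by the step ``we may assume every matching is perfect''. Since $\mathrm{fix}(\sigma_c)\le m-1$, your target $\sum_c\mathrm{fix}(\sigma_c)\ge m(m-1)$ forces every re-spliced $\sigma_c$ to be the identity, and this is false. Normalise the hub matchings to identities, let the rim matching on $v_1v_2$ be the transposition $(1\,2)$, and let all other rim matchings be identities.
\begin{itemize}
\item For $c\ge 3$ the residual cover is already perfect, with a transposition as closing permutation, so $N_c=P(C_n,m-1)-2$.
\item For $c=1$ your re-spliced $\sigma_c$ is the identity, but the true residual cover leaves $(v_1,2)$ and $(v_2,2)$ unmatched. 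Hence $N_1=P(C_n,m-1)+P(C_{n-1},m-1)/(m-1)$, and $c=2$ behaves the same way.
\end{itemize}
The total reaches $P(W_n,m)$ only because of these extra colourings (with equality when $m=n=4$: $2\cdot 20+2\cdot 16=72$). They are exactly what your completion throws away. So no charging among the $\sigma_c$ can succeed, and the inclusion--exclusion fallback is not yet an argument.

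The same defect breaks $m=3$. There each re-spliced contribution is $\mathrm{fix}(\sigma_c)\in\{0,2\}$, so these contributions can never total $3$. For the extremal cover, a $3$-cycle on one rim edge, they are all $0$, although each hub colour actually leaves exactly one colouring. That is also how your upper-bound cover has to be counted: directly, not by the formula.

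\textbf{The missing idea} is level/non-level bookkeeping. Call a hub colour $c$ \emph{level} if every rim matching fixes $c$, and let $s$ be the number of level colours. Write $k=m-1$, $t_1=P(C_{n-1},k)/k$ and $t_2=P(C_n,k)/(k(k-1))$, so that $t_2-t_1=1$ for even $n$.
\begin{itemize}
\item If $c$ is level, the residual cover is genuinely perfect and the other level colours are fixed points of $\sigma_c$. Hence $N_c\ge (k-1)^n+s-2$.
\item If $c$ is not level, do not re-splice the closing edge. Take it to be a rim edge whose residual matching has only $k-1$ edges, and complete only the remaining edges of the path. This gives $N_c\ge k(k-1)^{n-1}-a t_1-(k-1-a)t_2$. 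Here $a\ge s$, because every level colour survives as a diagonal edge on that closing edge.
\end{itemize}
With $n\ge 4$, summing gives $P_{DP}(W_n,\mathcal F)-P(W_n,m)\ge (m-s)(m-2)(m-4)\ge 0$ for $m\ge 4$. For $m=3$ the same bounds give $N_c\ge 1+s$ for non-level $c$; together with $s\ne 2$ this yields the lower bound $3$. This is precisely the mechanism of Lemma~\ref{lem_even_local}: $l\le k-2$ surviving maps on $e_1$, $a\ge s$ (or $s-1$), and $s\ne k-1$.
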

	\begin{theorem}\cite{BHKMMSTW22}\label{result 3}
		Suppose that \(G_1, \dots, G_n\) are vertex disjoint graphs for some \(n \geq 2\) and \(G \in \bigoplus_{i=1}^n (G_i, 1)\). Then
		\[
		P_{DP}(G, m) \leq \frac{\prod_{i=1}^n P_{DP}(G_i, m)}{m^{n-1}}.
		\]
	\end{theorem}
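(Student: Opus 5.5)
We prove the case $n=2$ directly and then induct on $n$. Write $G = G_1 \cup G_2$, where $G_1$ and $G_2$ share exactly the glued vertex $v$ and no edges. Choose $m$-fold covers $\mathcal{H}_1=(L_1,H_1)$ of $G_1$ and $\mathcal{H}_2=(L_2,H_2)$ of $G_2$ with
\[
P_{DP}(G_1,\mathcal{H}_1)=P_{DP}(G_1,m), \qquad P_{DP}(G_2,\mathcal{H}_2)=P_{DP}(G_2,m).
\]
Such covers exist because the minimum is taken over finitely many covers up to isomorphism. Write $L_1(v)=\{x_1,\dots,x_m\}$ and $L_2(v)=\{y_1,\dots,y_m\}$. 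Let $a_i$ be the number of $\mathcal{H}_1$-colorings of $G_1$ that use $x_i$ at $v$, and let $b_i$ be the number of $\mathcal{H}_2$-colorings of $G_2$ that use $y_i$ at $v$. Then
\[
\sum_{i=1}^m a_i=P_{DP}(G_1,m), \qquad \sum_{i=1}^m b_i=P_{DP}(G_2,m).
\]

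For each $j\in\{0,1,\dots,m-1\}$ we build an $m$-fold cover $\mathcal{H}^{(j)}$ of $G$. Take the disjoint union of $H_1$ and $H_2$ and identify $x_i$ with $y_{i+j}$, with indices taken modulo $m$; the common fiber of $v$ then consists of these $m$ identified vertices. We must check this is a genuine $m$-fold cover, and this is the step needing the most care, though it is routine.
\begin{itemize}
\item Every fiber is still a clique of size $m$: the fiber of $v$ is a clique because $L_1(v)$ and $L_2(v)$ are cliques and the identification is a bijection.
\item For every edge $uw$ of $G$, the edges of $\mathcal{H}^{(j)}$ between $L(u)$ and $L(w)$ form a matching. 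Each edge of $G$ lies in exactly one $G_i$, so this matching is inherited from $\mathcal{H}_i$, transported through the bijection when $v\in\{u,w\}$.
\item There are no edges between fibers of nonadjacent vertices, since the only vertex the two parts share is $v$.
\end{itemize}

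Next we count colorings of $\mathcal{H}^{(j)}$. An independent transversal of $\mathcal{H}^{(j)}$ is determined by the choice of the identified color at $v$ together with an extension in each part. Because $G_1-v$ and $G_2-v$ have no edges between them, neither do their fibers, so the two extensions are chosen independently. Hence
\[
P_{DP}\bigl(G,\mathcal{H}^{(j)}\bigr)=\sum_{i=1}^m a_i\, b_{i+j}.
\]
Summing over $j$, each pair $(i,k)$ occurs exactly once, so
\[
\sum_{j=0}^{m-1} P_{DP}\bigl(G,\mathcal{H}^{(j)}\bigr)=\Bigl(\sum_{i} a_i\Bigr)\Bigl(\sum_{k} b_k\Bigr)=P_{DP}(G_1,m)\,P_{DP}(G_2,m).
\]
Some $j$ therefore gives at most the average value, and
\[
P_{DP}(G,m)\le P_{DP}\bigl(G,\mathcal{H}^{(j)}\bigr)\le \frac{P_{DP}(G_1,m)\,P_{DP}(G_2,m)}{m}.
\]

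For general $n\ge 3$, let $G'$ be the union of $G_1,\dots,G_{n-1}$ glued at the common vertex, so that $G'\in\bigoplus_{i=1}^{n-1}(G_i,1)$ and $G$ is obtained by gluing $G'$ and $G_n$ at that vertex. By the induction hypothesis,
\[
P_{DP}(G',m)\le \frac{\prod_{i=1}^{n-1}P_{DP}(G_i,m)}{m^{n-2}}.
\]
The case $n=2$ applied to $G'$ and $G_n$ gives
\[
P_{DP}(G,m)\le \frac{P_{DP}(G',m)\,P_{DP}(G_n,m)}{m}.
\]
Combining the two inequalities yields the claimed bound. The induction is not affected by whether the glued vertex of $G_n$ coincides with the vertex already shared by the others, since the $n=2$ argument uses only that the two pieces meet in exactly one vertex.

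The main point to verify carefully is the legitimacy of the glued cover together with the product formula for the number of its colorings. Once these hold, the averaging over the $m$ cyclic shifts is immediate.
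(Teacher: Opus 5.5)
Your proposal is correct and follows essentially the same route the paper takes for its hypergraph generalization (Theorem~\ref{th-2-1}). Both arguments fix optimal covers of the pieces and form the $m$ cyclically shifted amalgamated covers, whose coloring counts are $\sum_i a_i b_{i+j}$ (Lemma~\ref{lem_glue}); they then average over the shifts and induct on $n$ by gluing $G'$ with $G_n$.
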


	 Bernshteyn and Kostochka \cite{BK19} introduced the DP-colorings for hypergraphs. We introduced the DP color functions of hypergraphs \cite{CWD26}. 	
	Let $\varphi \colon X \rightharpoonup Y$ be a \textit{partial map} $\varphi$ from a subset of $X$ to $Y$. $[X \rightharpoonup Y]$ denotes the set of all partial maps. For $k \in \mathbb{N}^+$, consider a family $\mathcal{F} \subseteq [X \rightharpoonup \brk{k} ]$ of partial maps. The domain $\operatorname{dom}(\mathcal{F})$ of $\mathcal{F}$ is defined by
	$$
	\operatorname{dom}(\mathcal{F}) \coloneqq \bigl\{ \operatorname{dom}(\varphi) : \varphi \in \mathcal{F} \bigr\}.
	$$
	
	Then each $\varphi \in \mathcal{F}$ is a subset of the Cartesian product $X \times \brk{k} $. Thus, we can view $\mathcal{F}$ as a hypergraph on $X \times \brk{k} $ and view $\text{dom}\operatorname{dom}(\mathcal{F})$ as a hypergraph on $X$.
	A function $f\colon X \to \brk{k}$ avoids a partial map $\varphi\colon X \rightharpoonup \brk{k}$ if $\varphi \nsubseteq f$. Given a family $\mathcal{F} \subseteq [X \rightharpoonup \brk{k}]$ of partial maps, a function $f\colon X \to \brk{k}$ that avoids every $\varphi \in \mathcal{F}$ is called a \textit{$(k,\mathcal{F})$-coloring} (or simply an \textit{$\mathcal{F}$-coloring}).
	
	A \textit{$k$-fold cover} of $\mathcal{H}$ is a family of partial maps $\mathcal{F} \subseteq [X \rightharpoonup \brk{k}]$ such that $\operatorname{dom}(\mathcal{F}) \subseteq V(\mathcal{H})$ and if $\operatorname{dom}(\varphi) = \operatorname{dom}(\psi)$ for distinct $\varphi, \psi \in \mathcal{F}$, then $\varphi \cap \psi = \emptyset$.
	Let $\mathcal{F}_e = \{ \varphi \in \mathcal{F} : \text{dom}(\varphi) = e \}$ and let $\mathcal{F} \setminus \mathcal{F}_e = \{ \varphi \in \mathcal{F} : \text{dom}(\varphi) \neq e \}$. If $\mathcal{F}$ is a $k$-fold cover of $\mathcal{H}$, then, $\mathcal{F} \setminus \mathcal{F}_e$ is a $k$-fold cover of $\mathcal{H} - e$. If $\mathcal{F}$ is a $k$-fold cover of a hypergraph $\mathcal{H}$, then, for each edge $e$ in $\mathcal{H}$,
	$$
	|\mathcal{F}_e| = \bigl| \{ \varphi \in \mathcal{F} : \operatorname{dom}(\varphi) = e \} \bigr| \leq k.
	$$
	
	Throughout this paper, all $\mathcal{F}$ are perfect, i.e., $|\mathcal{F}_e|=k$ for each $e\in E(\mathcal{H})$, unless otherwise stated. A hypergraph $\mathcal{H}$ on a set $X$ is \textit{$k$-DP-colorable} if each $k$-fold cover $\mathcal{F}$ of $\mathcal{H}$ admits a $(k, \mathcal{F})$-coloring $f: X \to \brk{k}$.
	The \textit{DP color function threshold} of $\mathcal{H}$, denoted by $\tau_{DP}(\mathcal{H})$, is the smallest $N \geq \chi(\mathcal{H})$ such that $P_{DP}(\mathcal{H},k)=P(\mathcal{H},k)$ whenever $k \geq N$. 
	Let $\tau_{DP}(\mathcal{H}) = \infty$ if $P_{DP}(\mathcal{H},k)>P(\mathcal{H},k)$ for infinitely many $k$.
	Let $\mathcal{H}$ be a hypergraph with $E(\mathcal{H})=\{e_j: j\in \brk{m}\}$. A \textit{natural $k$-cover} of $\mathcal{H}$ is denoted by
	$$
	\iota_{\mathcal H}(k)
	=\left \{\varphi^{(j)}_i:  i\in \brk{k},  j\in \brk{m}\right \},
	$$ 
	where $\varphi^{(j)}_i$ is the map: $e_j\mapsto \{i\}$. 
	If $\mathcal I$ is the
	set of $\iota_{\mathcal H}(k)$-colorings of $\mathcal H$ and $\mathcal J$ is the set of proper $k$-colorings of $\mathcal H$, then the function
	$f: \mathcal J\rightarrow \mathcal I$ given by
	$$ f(c) = \{(v,c(v)): v\in V
	(\mathcal H)\}$$
	is a bijection between the $\iota_{\mathcal H}(k)$-colorings  of $\mathcal H$ and its proper $k$-colorings.
	
	Suppose that $\mathcal F$ is a $k$-fold cover of a hypergraph $\mathcal H$ for $k\in \mathbb{N}$. Let $P_{DP}(\mathcal H,\mathcal F)$ be the number of $\mathcal F$-colorings of $\mathcal H$. The \textit{DP color function} denoted by $P_{DP}(\mathcal H,k)$ is the minimum value of $P_{DP}(\mathcal H,\mathcal F)$ for all $k$-fold covers $\mathcal{F}$ of $\mathcal{H}$. It is clear that for any hypergraph $\mathcal H$ and $k\in \mathbb{N}$, $P_{DP}(\mathcal H,k)\le P(\mathcal H,k)$.
	
	The remainder of this paper is organized as follows. In Section~\ref{sec:prelim}, we recall some known results that will be needed in our proofs. Section~\ref{sec:Join} is devoted to establishing properties of the DP color function for the join of a hypergraph with a complete graph. In particular, we extend Theorems \ref{result 1} and \ref{result 2} to the hypergraph case. In Section~\ref{sec:Glu}, we introduce the definition of vertex-gluings of certain hypergraphs and present related theorems. Specifically, we extend Theorem \ref{result 3} to the hypergraph case.
	
	\section{Preliminaries}\label{sec:prelim}
	In this section, we review several results to be utilized in the subsequent proof.

	\begin{lemma}\label{lem:2-1}
		If $\mathcal T_{m}^{r}$ is a $r$-uniform hypertree with $m$ edges, where $r\geq 2$ and $m\geq 0$, then $$P(\mathcal T_{m}^{r},k)=k(k^{r-1}-1)^{m}.$$
	\end{lemma}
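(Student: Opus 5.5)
We argue by induction on the number $m$ of edges of the $r$-uniform hypertree $\mathcal T_m^r$. For $m=0$ the hypertree is a single vertex, which has exactly $k$ proper $k$-colorings, and $k(k^{r-1}-1)^0=k$. (For $m\ge 1$ the hypertree has $1+m(r-1)$ vertices.)

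For the inductive step with $m\ge 1$, we first locate a ``leaf edge'': an edge $e$ such that all but one of its vertices have degree $1$ in $\mathcal T_m^r$. To see one exists, consider the bipartite incidence graph $B$ of $\mathcal T_m^r$, with vertex classes $V(\mathcal T_m^r)$ and $E(\mathcal T_m^r)$.
\begin{itemize}
\item Since $\mathcal T_m^r$ is connected, $B$ is connected (restricted to non-isolated vertices, and all vertices are non-isolated when $m\ge1$).
\item Since $\mathcal T_m^r$ is linear and has no cycle in the sense defined above, $B$ is a tree. A cycle in $B$ has length $2p$ with $p\ge 2$. For $p=2$ it would give two edges sharing two vertices, contradicting linearity. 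For $p\ge3$ it gives a hypergraph cycle.
\end{itemize}
Root $B$ at some edge-node and take an edge-node $e$ of maximum depth. Its children are vertex-nodes whose only neighbour in $B$ is $e$, since any other neighbour would be an edge-node of greater depth. So all vertices of $e$ other than its parent vertex $w$ have degree $1$. If $e$ is the root itself, then $m=1$ and we may let $w$ be any vertex of $e$.

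Let $\mathcal T'$ be obtained by deleting $e$ and its $r-1$ degree-one vertices. Then $\mathcal T'$ is still connected, linear and acyclic, so it is an $r$-uniform hypertree with $m-1$ edges. By induction it has $k(k^{r-1}-1)^{m-1}$ proper $k$-colorings.

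Every proper $k$-coloring of $\mathcal T_m^r$ restricts to a proper coloring of $\mathcal T'$. Conversely, fix a proper coloring of $\mathcal T'$; the color of $w$ is then determined. The $r-1$ new vertices lie only in $e$, so the only constraint on them is that $e$ is not monochromatic. There are $k^{r-1}$ ways to color them, and exactly one of these makes every vertex of $e$ share the color of $w$. This leaves $k^{r-1}-1$ valid extensions, so
$$P(\mathcal T_m^r,k)=(k^{r-1}-1)\,P(\mathcal T',k)=k(k^{r-1}-1)^m,$$
as claimed.

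The main obstacle is the structural step: showing rigorously that a leaf edge exists and that deleting it leaves an $r$-uniform hypertree. The incidence-graph argument above handles this, and it is exactly where linearity is needed to exclude cycles of length $2$. The counting step is routine.
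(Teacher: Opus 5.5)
Your proof is correct. The paper gives no proof of Lemma~\ref{lem:2-1} to compare with. It is simply recalled in the preliminaries as a known fact, without even the attribution given to Lemma~\ref{lem:2-3}, so your argument supplies what the paper takes for granted.

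Your leaf-edge induction is the standard argument. It amounts to repeatedly applying the one-vertex gluing identity $P(\mathcal H,k)=P(\mathcal H_1,k)P(\mathcal H_2,k)/k$, with a single $r$-edge contributing $k^r-k=k(k^{r-1}-1)$.

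The only step you state without justification is that $\mathcal T'$ is connected. It follows because the incidence graph of $\mathcal T'$ is $B$ with the pendant subtree formed by $e$ and its leaf children removed. That graph is still a tree, which gives connectivity, linearity and acyclicity of $\mathcal T'$ at once.

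Your incidence-tree formulation also has a side benefit. It rigorously justifies a step the paper later asserts with only a one-line reason, in the proof of Lemma~\ref{lem_even_local}: in a connected, linear, acyclic component there is always an unrelabelled edge meeting the relabelled part in exactly one vertex.
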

	
	Borowiecki and Łazuka provided the chromatic polynomials of $r$-uniform unicyclic hypergraphs with $m$ edges as follows \cite{BŁ07}.
	\begin{lemma}\label{lem:2-3}
		Let $\mathcal H$ be a linear hypergraph. $\mathcal H$ is a $r$-uniform unicyclic hypergraph with $m + p$ edges and a cycle of length $p$ if and only if
		$$P(\mathcal H,k)=(k^{r-1}-1)^{m+p}+(-1)^{p}(k-1)(k^{r-1}-1)^{m}$$
		where $r\geq 3,m\geq 0$ and $p\geq 3$.
	\end{lemma}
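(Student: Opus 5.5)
The plan is to prove the two directions separately. The forward direction is a direct computation: peel off pendant edges, then count colorings of the bare cycle with a transfer matrix. The converse reads the structure of $\mathcal H$ back from the polynomial and then applies the forward direction to fix $p$.

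\emph{Forward direction.} Let $\mathcal H$ be a linear, $r$-uniform, unicyclic hypergraph with $m+p$ edges whose unique cycle $C$ has length $p$.
\begin{enumerate}
\item Every edge not in $C$ meets the rest of the hypergraph in at most one vertex. Otherwise, together with a path in the connected remainder, it would produce a second cycle.
\item Each cycle edge $e_i$ meets the other cycle edges only in $v_{i-1}$ and $v_i$. An extra shared vertex would yield a shorter cycle, and linearity already rules out cycles of length $2$.
\item Choose a non-cycle edge $e$ that is a leaf of the tree structure hanging off $C$, that is, all its vertices but one have degree $1$. Then $P(\mathcal H,k)=(k^{r-1}-1)\,P(\mathcal H',k)$, where $\mathcal H'$ is $\mathcal H$ with $e$ and its $r-1$ private vertices deleted. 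Indeed, for any colouring of $\mathcal H'$, exactly $k^{r-1}-1$ colourings of the private vertices keep $e$ non-monochromatic; this is the same computation as in Lemma~\ref{lem:2-1}. Repeating $m$ times leaves the linear cycle $C_p^r$.
\item For $C_p^r$, fix colours $a$ and $b$ on the consecutive cycle vertices $v_{i-1}$ and $v_i$. The $r-2$ interior vertices of $e_i$ then admit exactly $k^{r-2}-\delta_{ab}$ good colourings. So $P(C_p^r,k)=\operatorname{tr}(M^p)$ with $M=k^{r-2}J-I$, where $J$ is the $k\times k$ all-ones matrix.
\item The eigenvalues of $M$ are $k^{r-1}-1$ (once) and $-1$ (with multiplicity $k-1$). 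Hence $P(C_p^r,k)=(k^{r-1}-1)^p+(-1)^p(k-1)$, and multiplying by $(k^{r-1}-1)^m$ gives the stated formula.
\end{enumerate}

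\emph{Converse.} Let $\mathcal H$ be linear and $r$-uniform with $r\ge 3$, and suppose $P(\mathcal H,k)$ equals the given polynomial. I will use the expansion
\[
P(\mathcal H,k)=\sum_{S\subseteq E}(-1)^{|S|}k^{c(S)},
\]
where $c(S)$ is the number of components of the spanning subhypergraph $(V,S)$.
\begin{enumerate}
\item The degree gives $n=|V|=(r-1)(m+p)$.
\item The next possible exponent is $n-(r-1)$. A single edge gives $c(S)=n-r+1$. Any set of at least two edges gives $c(S)\le n-2r+3<n-r+1$, since two edges share at most one vertex and $r\ge 3$. So the coefficient of $k^{n-r+1}$ is $-|E|$. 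Comparing with the expansion of the given polynomial, whose $k^{n-r+1}$ coefficient is $-(m+p)$, gives $|E|=m+p$.
\item The number of components is the largest $t$ with $k^t\mid P(\mathcal H,k)$. The given polynomial vanishes at $k=0$. Its derivative at $0$ should be nonzero: only the $(-1)^p(k-1)(k^{r-1}-1)^m$ part contributes a linear term, namely $(-1)^{p+m}k$. So $\mathcal H$ is connected.
\item A connected $r$-uniform linear hypergraph satisfies $n\le 1+(r-1)|E|$, with equality iff it is a hypertree. Here $n=(r-1)|E|$ is exactly one less. Counting through an edge-by-edge build-up, each edge adds at most $r-1$ new vertices, so exactly one edge is ``closing''. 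I expect this to force exactly one cycle.
\item Let $q$ be the length of that cycle. The forward direction gives $P(\mathcal H,k)=(k^{r-1}-1)^{m+p}+(-1)^q(k-1)(k^{r-1}-1)^{m+p-q}$. Equating with the hypothesis and comparing orders of vanishing at a root of $k^{r-1}-1$ other than $k=1$ forces $q=p$.
\end{enumerate}

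The main obstacle is step 4 of the converse: turning the vertex deficit of exactly one into ``exactly one cycle''. The build-up argument must show that no closing edge can create two distinct cycles (for instance by meeting the existing part in three vertices), and must use linearity to exclude length-$2$ cycles. I would first try a careful induction that orders the edges so that each new edge meets the previous union. The delicate point is that cycles are counted as subhypergraphs, so the induction must track them carefully.
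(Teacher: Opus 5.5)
The paper does not prove this lemma; it is quoted from Borowiecki and \L{}azuka. Your argument therefore has to stand on its own.

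The forward direction holds up. Peeling leaf edges, each contributing a factor $k^{r-1}-1$, and computing $P(C_p^r,k)=\operatorname{tr}(M^p)$ with $M=k^{r-2}J-I$ is correct. Only step~1 is misstated. Take a non-cycle edge $f$ attached to the cycle that carries a further pendant edge: $f$ meets the rest of $\mathcal H$ in two vertices, and $\mathcal H-f$ is disconnected. All you actually use is that a leaf edge exists when $m\ge 1$. That holds because the incidence graph of $\mathcal H$ is connected and unicyclic.

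\textbf{The gap is in the converse.} You start from ``Let $\mathcal H$ be linear and $r$-uniform'', but the lemma assumes only linearity. $r$-uniformity is part of what the polynomial has to force.
\begin{itemize}
\item Without it, step~2 does not give $|E|=m+p$. A single edge $e$ contributes $-k^{n-|e|+1}$. Any two edges $e,f$ of a linear hypergraph leave $n-(|e|-1)-(|f|-1)$ components. So the coefficient comparison only shows that the minimum edge size is $r$ and that exactly $m+p$ edges have size $r$.
\item Larger edges are not excluded, and step~4 then fails. The incidence graph has cyclomatic number $1+\sum_{|e|>r}(|e|-1)$, not $1$.
\item Part of the missing argument is routine. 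Subtract the acyclic part $k^n\prod_e(1-k^{1-|e|})$ from $P(\mathcal H,k)$. What remains comes only from edge sets $S$ containing a Berge cycle, and linearity together with $|e|\ge r$ gives $n-c(S)\ge 3r-4$ for these. Comparing top terms with the given polynomial, whose cycle correction has degree at most $n-3r+4$, rules out edge sizes $r+1,\dots,3r-4$.
\item The remaining cases are not routine. An edge of size $3r-3$ already contributes in degree $n-3r+4$, the same degree as a triangle of $r$-edges. Larger edges therefore interact with the cycle terms, and a further argument is needed that the proposal does not supply.
\end{itemize}

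\textbf{Step~4 is easy once uniformity is available.} The incidence graph $B$ is connected, with $n+|E|$ nodes and $r|E|$ edges. Its cyclomatic number is therefore $(r-1)|E|-n+1=1$, so $B$ has exactly one cycle. Berge cycles of $\mathcal H$ correspond exactly to cycles of $B$, and linearity forces the length to be at least $3$. This also settles your worry about an edge meeting the earlier part in three vertices: that would push the cyclomatic number to at least $2$.

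\textbf{Minor points.} In step~3, the multiplicity of the root $0$ need not equal $c(\mathcal H)$ for general hypergraphs. You only need $c(\mathcal H)$ to be at most that multiplicity, which always holds. Step~5 is fine; comparing degrees alone already gives $q=p$.
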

	
	\begin{lemma}\cite{ZD17}\label{K_1+H}
		For any hypergraph $\mathcal{H}$, $P(K_1 \vee \mathcal{H},k) = kP(\mathcal{H},k-1)$.
	\end{lemma}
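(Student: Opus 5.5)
The statement is Lemma~\ref{K_1+H}: for any hypergraph $\mathcal H$, $P(K_1\vee\mathcal H,k)=kP(\mathcal H,k-1)$. I will prove it by a direct bijective count of proper $k$-colorings, splitting according to the color of the new vertex $u$.

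First I fix the structure. In $K_1\vee\mathcal H$ the vertex set is $V(\mathcal H)\cup\{u\}$. The edges are those of $\mathcal H$ together with the 2-edges $\{u,v\}$ for every $v\in V(\mathcal H)$.

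Let $f$ be a proper $k$-coloring of $K_1\vee\mathcal H$. Each 2-edge $\{u,v\}$ has only two vertices, so properness forces $f(v)\neq f(u)$ for all $v\in V(\mathcal H)$. Hence for each of the $k$ choices $c=f(u)$, the restriction $f|_{V(\mathcal H)}$ maps into $\brk{k}\setminus\{c\}$, a set of $k-1$ colors. On each edge of $\mathcal H$ this restriction is non-monochromatic, since those edges are edges of $K_1\vee\mathcal H$ and $f$ is proper.

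Conversely, take any $c\in\brk{k}$ and any map $g\colon V(\mathcal H)\to\brk{k}\setminus\{c\}$ that is non-monochromatic on every edge of $\mathcal H$. Extending $g$ by $u\mapsto c$ gives a proper coloring of $K_1\vee\mathcal H$: the new 2-edges are bichromatic because $c$ is not in the range of $g$, and the old edges are handled by $g$ itself.

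So for fixed $c$, proper colorings with $f(u)=c$ correspond bijectively to proper colorings of $\mathcal H$ from a $(k-1)$-element palette. Relabeling $\brk{k}\setminus\{c\}$ order-preservingly as $\brk{k-1}$ shows there are exactly $P(\mathcal H,k-1)$ of these. Summing over the $k$ values of $c$ gives $P(K_1\vee\mathcal H,k)=kP(\mathcal H,k-1)$ for every positive integer $k$.

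The identity also holds as polynomials. Both sides are polynomials in $k$ that agree at every positive integer (for $k=1$ both sides vanish, using $P(\mathcal H,0)=0$ for nonempty $\mathcal H$), so they coincide. I expect no genuine obstacle; the only point needing care is noting that the 2-edges force $u$'s color to be distinct from every vertex of $\mathcal H$.
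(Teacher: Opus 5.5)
Your proof is correct and complete. Fixing the apex color $c$, the 2-edges $\{u,v\}$ force every vertex of $\mathcal H$ to avoid $c$, the edges of $\mathcal H$ keep exactly their non-monochromatic condition, and the count $P(\mathcal H,k-1)$ does not depend on $c$; summing over the $k$ choices of $c$ gives $kP(\mathcal H,k-1)$.

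The paper gives no proof of its own here; it simply quotes the lemma from \cite{ZD17}, and your direct count is the standard argument for it. For the polynomial identity, agreement at all integers $k\ge 2$ already suffices, so your separate $k=1$ check is harmless but not needed.
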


	\begin{lemma}\cite{CWD26}\label{P_DP(C)}
		Suppose $\mathcal H$ is a unicyclic linear $r$-uniform hypergraph with $m+p$ edges, and its unique cycle is of 
		length $p$ for $p\ge 3$, $r\ge 3$ and $m\ge 0$. 
		Then, 
		\begin{enumerate}
			\item[(1)]  If $p$ is odd, then 
			$$
			P_{DP}(\mathcal H,k)=P(\mathcal H,k).
			$$
			\item[(2)] 
			If $p$ is even, then 
			$$P_{DP}(\mathcal H,k)=(k^{r-1}-1)^{m+p}+(-1)^{p+1}(k^{r-1}-1)^{m}.$$
		\end{enumerate} 
	\end{lemma}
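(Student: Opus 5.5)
Since $P_{DP}(\mathcal H,k)$ is a minimum over covers, the plan is to compute $P_{DP}(\mathcal H,\mathcal F)$ exactly for an arbitrary perfect $k$-fold cover $\mathcal F$ and then minimize. The computation first peels off the $m$ non-cycle edges and then handles the cycle by an inclusion--exclusion expansion.

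\emph{Step 1: the $m$ non-cycle edges.} The first observation is structural. If $e$ is an edge and $v\in e$, the $k$ maps in $\mathcal F_e$ are pairwise disjoint. Hence their values at $v$ are pairwise distinct, so each colour of $v$ is used by exactly one $\varphi\in\mathcal F_e$.

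Since $\mathcal H$ is linear, connected and unicyclic, the non-cycle edges can be removed one at a time. Each removed edge $e$ is pendant: it meets the rest of the hypergraph in a single vertex $v$, and its other $r-1$ vertices appear nowhere else.

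Fix any $\mathcal F$-colouring of the remaining hypergraph. Exactly one $\varphi\in\mathcal F_e$ agrees with it at $v$, so exactly $k^{r-1}-1$ of the $k^{r-1}$ extensions to $e\setminus\{v\}$ avoid all of $\mathcal F_e$. Induction on $m$ therefore gives
\[
P_{DP}(\mathcal H,\mathcal F)=(k^{r-1}-1)^m\,P_{DP}(\mathcal C,\mathcal F'),
\]
where $\mathcal C$ is the cycle with $p$ edges and $\mathcal F'$ is the restriction of $\mathcal F$ to $\mathcal C$.

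\emph{Step 2: the cycle.} Let the cycle vertices be $v_1,\dots,v_p$, with $v_{i-1},v_i\in e_i$. Each $e_i$ has $r-2$ further vertices of degree one in $\mathcal C$.

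By the observation in Step 1, $\mathcal F_{e_i}$ defines a permutation $\sigma_i$ of $\brk{k}$: $\sigma_i(a)$ is the value at $v_i$ of the unique $\varphi\in\mathcal F_{e_i}$ with $\varphi(v_{i-1})=a$. Fix colours $c_1,\dots,c_p$ of the cycle vertices. The number of admissible colourings of the private vertices of $e_i$ is then $k^{r-2}-[c_i=\sigma_i(c_{i-1})]$. Thus
\[
P_{DP}(\mathcal C,\mathcal F')=\sum_{c}\prod_{i=1}^p\bigl(k^{r-2}-[c_i=\sigma_i(c_{i-1})]\bigr).
\]

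Expand the product over subsets $S\subseteq\brk{p}$. For $S\ne\brk{p}$, the constraints indexed by $S$ form disjoint paths around the cycle. Hence exactly $k^{p-|S|}$ tuples $c$ satisfy them, and $S$ contributes $(-1)^{|S|}k^{(r-1)(p-|S|)}$. For $S=\brk{p}$, the count is the number $t$ of fixed points of $\sigma=\sigma_p\circ\cdots\circ\sigma_1$. This gives
\[
P_{DP}(\mathcal C,\mathcal F')=(k^{r-1}-1)^p-(-1)^p+(-1)^p t .
\]

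\emph{Step 3: minimization.} Here $t$ ranges over the possible fixed-point counts of a permutation of $\brk{k}$, so $t\in\{0,1,\dots,k-2,k\}$.

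Both extremes are realized by some cover. Taking all $\sigma_i$ equal to the identity is the natural cover, which gives $t=k$. Replacing one $\sigma_i$ by a cyclic shift gives $t=0$ when $k\ge 2$.

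If $p$ is odd, the minimum is at $t=k$, giving $(k^{r-1}-1)^p-(k-1)$. Multiplied by $(k^{r-1}-1)^m$, this is $P(\mathcal H,k)$ by Lemma~\ref{lem:2-3}.

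If $p$ is even, the minimum is at $t=0$, giving $(k^{r-1}-1)^{m+p}-(k^{r-1}-1)^m$, which is the claimed formula. For $k=1$ both sides are $0$.

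\emph{Main obstacle.} The only delicate point is the bookkeeping in Step 2. One must check that every non-full subset $S$ of cycle edges imposes independent path constraints, so that its count is exactly $k^{p-|S|}$. Only the full cycle produces the cover-dependent term $t$. The rest of the argument is routine.
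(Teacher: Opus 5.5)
The paper gives no proof of this lemma; it is quoted from \cite{CWD26}. The nearest in-paper argument is the count in the proof of Lemma~\ref{lem_even_local}. There one deletes a cycle edge $e$ and uses Lemma~\ref{lem_nature} to relabel the cover of the acyclic hypergraph $\mathcal H-e$ into the natural cover. One then writes $P_{DP}(\mathcal H,\mathcal F)=P(\mathcal H-e,k)-a\,t_1-(k-a)\,t_2$. Here $t_1,t_2$ come from Lemma~\ref{lem1}, and $a$ is the number of relabelled maps on $e$ that give its two cycle vertices the same colour. Parity enters through the sign of $t_2-t_1$, computed via Lemmas~\ref{lem:2-1} and~\ref{lem:2-3}.

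Your argument is correct for $k\ge 2$ and takes a different, more self-contained route: you peel pendant edges and then evaluate the bare cycle by inclusion--exclusion. Your $t$ is exactly that $a$. Relabelling along the path $e_1,\dots,e_{p-1}$ turns the permutation on $e_p$ into $\sigma_p\circ\cdots\circ\sigma_1$, and your closed form is the same identity with $t_2-t_1=(-1)^p(k^{r-1}-1)^m$. What you gain is an exact formula for every cover with no chromatic-polynomial input. Since the natural cover has $t=k$, the value at $t=k$ is $P(\mathcal H,k)$ automatically, so you do not even need Lemma~\ref{lem:2-3}.

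The structural facts you invoke deserve one line of justification. By linearity the incidence graph has no $4$-cycles, so ``exactly one cycle'' means the incidence graph is unicyclic. A non-cycle edge farthest from the cycle is then pendant, and any extra incidence on a cycle edge would create a second cycle.

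One step is wrong: the closing claim that for $k=1$ ``both sides are $0$''. If $p$ is even, $m=0$ and $k=1$, the right-hand side of (2) is $(1-1)^{p}-(1-1)^{0}=-1$. But $P_{DP}(\mathcal H,1)=0$, and your own formula correctly gives $0$, because $t=1$ is forced when $k=1$. So part (2) as stated fails at $k=1$, $m=0$; at $k=1$ it holds only when $m\ge 1$. You should prove (2) for $k\ge 2$, which is exactly where your cyclic-shift cover with $t=0$ exists, and flag this exceptional case rather than assert it. Part (1) is fine for every $k\ge 1$, since $t=k$ is the only possibility when $k=1$.
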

	
	\begin{lemma}\cite{CWD26}\label{lem_nature}
		Suppose that $\mathcal H$ is a hypergraph with $m$	edges and has no cycles for $m\ge 0$. 
		Suppose that $\mathcal F=\{\varphi_i^{(j)}: i\in \brk{k}, j\in \brk{m}\}$ is a $k$-fold cover
		of $\mathcal H$ for $k\ge 2$. 
		Then there is a bijection between $\mathcal F$ and $\iota_{\mathcal {H}}$.
	\end{lemma}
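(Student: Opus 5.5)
The plan is to show that $\mathcal F$ is obtained from the natural cover $\iota_{\mathcal H}(k)$ by renaming colors independently at each vertex. Concretely, I will construct, for every vertex $v\in V(\mathcal H)$, a permutation $\sigma_v$ of $\brk{k}$ with the following property: for each edge $e_j$ and each $\varphi\in\mathcal F_{e_j}$, the composite $\sigma_v(\varphi(v))$ takes the same value for all $v\in e_j$. Granting this, the correspondence $\varphi\mapsto\{(v,\sigma_v(\varphi(v))):v\in e_j\}$ sends $\mathcal F_{e_j}$ bijectively onto the constant maps $\varphi^{(j)}_i$, $i\in\brk{k}$. Moreover, $f\mapsto \sigma\circ f$, where $(\sigma\circ f)(v)=\sigma_v(f(v))$, is a bijection from $\mathcal F$-colorings to $\iota_{\mathcal H}(k)$-colorings. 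In particular $P_{DP}(\mathcal H,\mathcal F)=P(\mathcal H,k)$.

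First, I will record what the cover axioms give. Since $\mathcal F$ is perfect, $|\mathcal F_{e}|=k$ for each edge $e$. Since distinct members of $\mathcal F_e$ are disjoint as subsets of $X\times\brk{k}$, for each fixed $v\in e$ the values $\varphi(v)$, $\varphi\in\mathcal F_e$, are pairwise distinct. Hence they are exactly $\brk{k}$. So for every edge $e$ and every $v\in e$, the map $\mathcal F_e\to\brk{k}$, $\varphi\mapsto\varphi(v)$, is a bijection.

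Next, I will use acyclicity through the bipartite vertex--edge incidence graph $B$ of $\mathcal H$. Having no cycles, including no two edges sharing two vertices (a cycle of length $2$), means exactly that $B$ is a forest. I root each component of $B$ at a vertex $v_0$ and set $\sigma_{v_0}=\mathrm{id}$. I then process $B$ in breadth-first order. When an edge-node $e$ is reached from its parent vertex $u$, $\sigma_u$ is already defined. For each child vertex $w\in e\setminus\{u\}$ I define $\sigma_w(\varphi(w))=\sigma_u(\varphi(u))$ for all $\varphi\in\mathcal F_e$. By the bijection property above, both $\varphi\mapsto\varphi(w)$ and $\varphi\mapsto\sigma_u(\varphi(u))$ are bijections $\mathcal F_e\to\brk{k}$, so $\sigma_w$ is a well-defined permutation. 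Isolated vertices get the identity.

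The key point, and the only real obstacle, is consistency. Each vertex must receive its permutation exactly once, and every edge must then satisfy the constancy condition at all of its vertices simultaneously, not just at the ones processed from it. This is where the forest structure is used. Every non-root vertex has a unique parent edge in $B$, so $\sigma_w$ is defined only once. For an edge $e$, all its vertices other than its parent $u$ are its children, and $\sigma$ was defined on them precisely to match $\sigma_u$ along $\mathcal F_e$. Hence the constancy holds on all of $e$. If $B$ had a cycle, some vertex would be reached along two different edges and the two definitions could conflict; this is exactly the situation of the even-cycle case in Lemma~\ref{P_DP(C)}.

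Finally, I will verify the transfer of colorings. $f$ avoids $\varphi\in\mathcal F_{e_j}$ iff $\sigma\circ f$ avoids the image of $\varphi$, which is some $\varphi^{(j)}_i$. Since $f\mapsto\sigma\circ f$ is invertible, this yields the claimed bijection.
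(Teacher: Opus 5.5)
Your proof is correct and follows essentially the same idea the paper uses. The lemma itself is only cited, but the relabeling procedure in the proof of Lemma~\ref{lem_even_local} propagates color permutations outward from a starting edge through a connected, linear, acyclic component, with each new edge meeting the already relabeled part in exactly one vertex; this is your breadth-first construction of the permutations $\sigma_v$ on the incidence forest $B$, and your version states the consistency check and the role of linearity (excluding $2$-cycles so that $B$ is a forest) more explicitly.
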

	
	\begin{lemma}\cite{CWD26}\label{lem1}
		Suppose that $\mathcal H$ is a hypergraph and $e = \{v_{i}: i \in \brk{r}\} \in E(\mathcal{H})$, where $r \ge 2$ and $c(\mathcal{H}-e)=r-1$.
		Let $v_1$ and $v_2$ be the two vertices in $e$ which are in the same component of $\mathcal{H}-e$. 
		Let $S_{k}^{(i_1,i_2,\ldots,i_r)}$ be the set of proper $k$-colorings of $\mathcal H-e$ that color $v_{l}$ by $i_l\in \brk{k}$ for all $l\in \brk{r}$, where $k\ge 2$. Then for any given $i_l\in\brk{k}$
		$$
		t_{1}=\frac{1}{k}(P(\mathcal H-e,k)-P(\mathcal H,k))
		$$
		and
		$$t_{2}=\frac{1}{k^{r-1}(k-1)}(k^{r-2}P(\mathcal H,k)+(1-k^{r-2})P(\mathcal H-e,k)),
		$$ 
		where  $t_1:=|S_{k}^{(i_1,i_2,\ldots,i_r)}|$ 
		when $i_1=i_2$, and $t_2:=|S_{k}^{(i_1,i_2,\ldots,i_r)}|$,
		when $i_1\ne i_2$.
	\end{lemma}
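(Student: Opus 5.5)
The plan is to reduce everything to two linear relations between $t_1$, $t_2$, $P(\mathcal H-e,k)$ and $P(\mathcal H,k)$, after first showing that $|S_k^{(i_1,\ldots,i_r)}|$ depends only on whether $i_1=i_2$.

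\textbf{Step 1: symmetry.} Since $c(\mathcal H-e)=r-1$ and $v_1,v_2$ lie in a common component, every other component of $\mathcal H-e$ contains at most one vertex of $e$. The $r$ vertices of $e$ must be spread over all $r-1$ components, since every component of $\mathcal H-e$ meets $e$ when $\mathcal H$ is connected. Hence the components are $C_1\ni v_1,v_2$ and $C_l\ni v_l$ for $3\le l\le r$, each $C_l$ containing exactly one vertex of $e$.

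A proper $k$-coloring of $\mathcal H-e$ is just an independent choice of a proper coloring on each component. Applying a permutation $\sigma_l$ of $\brk{k}$ to the colors on $C_l$ alone preserves properness and maps $S_k^{(i_1,\ldots,i_r)}$ bijectively onto $S_k^{(i'_1,\ldots,i'_r)}$, where $i'_1=\sigma_1(i_1)$, $i'_2=\sigma_1(i_2)$ and $i'_l=\sigma_l(i_l)$ for $l\ge 3$. The permutations can be chosen so that:
\begin{itemize}
\item for $l\ge 3$, $\sigma_l$ sends $i_l$ to any prescribed color;
\item $\sigma_1$ sends the pair $(i_1,i_2)$ to any other pair of the same type (equal or distinct).
\end{itemize}
So the cardinality takes one value $t_1$ on all tuples with $i_1=i_2$ and one value $t_2$ on all tuples with $i_1\ne i_2$.

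\textbf{Step 2: two counting identities.} First partition all proper colorings of $\mathcal H-e$ by the colors of $v_1,\ldots,v_r$. There are $k^{r-1}$ tuples with $i_1=i_2$ and $k^{r-1}(k-1)$ tuples with $i_1\ne i_2$, so
$$P(\mathcal H-e,k)=k^{r-1}t_1+k^{r-1}(k-1)t_2 .$$

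Second, a proper coloring of $\mathcal H-e$ fails to be proper for $\mathcal H$ exactly when $e$ is monochromatic. That happens for the $k$ constant tuples, each of which has $i_1=i_2$ and so contributes $t_1$ colorings. Hence
$$P(\mathcal H,k)=P(\mathcal H-e,k)-kt_1 .$$

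\textbf{Step 3: solve.} The second identity gives $t_1=\frac1k\bigl(P(\mathcal H-e,k)-P(\mathcal H,k)\bigr)$. Substituting into the first and using $k^{r-1}t_1=k^{r-2}\bigl(P(\mathcal H-e,k)-P(\mathcal H,k)\bigr)$, then dividing by $k^{r-1}(k-1)$ (legitimate as $k\ge2$), gives the stated formula for $t_2$.

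The only step needing care is Step 1. It relies on the component structure forcing each $v_l$ with $l\ge3$ to be alone in its component. This is where the hypothesis $c(\mathcal H-e)=r-1$, together with connectivity of $\mathcal H$ (implicit in the intended setting), is used. Once that is in place, the remaining steps are direct counting.
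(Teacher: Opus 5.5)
Your proof is correct and complete. Permuting colors independently on each component of $\mathcal H-e$ shows that $|S_k^{(i_1,\ldots,i_r)}|$ depends only on whether $i_1=i_2$. The identities $P(\mathcal H-e,k)=k^{r-1}t_1+k^{r-1}(k-1)t_2$ and $P(\mathcal H,k)=P(\mathcal H-e,k)-kt_1$ then solve to exactly the stated $t_1$ and $t_2$.

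This paper does not prove the lemma itself; it quotes it from \cite{CWD26}, so there is no in-text argument to compare against.

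One small refinement: connectivity need not be taken as an implicit extra assumption. The phrase ``the two vertices in $e$ which are in the same component'' already puts each $v_l$ ($l\ge 3$) in a component containing no other vertex of $e$, and that is all your Step 1 uses. Combined with $c(\mathcal H-e)=r-1$, it also forces $\mathcal H$ to be connected.
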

	
	\begin{lemma}\cite{CWD261} \label{prop1.1}
		Let $\mathcal H$ be a hypergraph. Suppose that $e = \{v_i : i \in \brk{n_e}\} \in E(\mathcal{H})$, where $n_{e} \ge 2$ and $c(\mathcal{H}-e)=n_e-1$.
		If $P(\mathcal{H}-e, k) < \frac{k^{n_e-1}}{k^{n_e-1}-1} P(\mathcal{H},k)$, then  
		$$
		P_{DP}(\mathcal H,k)<P(\mathcal H,k).
		$$
		
	\end{lemma}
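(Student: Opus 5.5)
The plan is to exhibit one explicit $k$-fold cover $\mathcal F$ of $\mathcal H$ with $P_{DP}(\mathcal H,\mathcal F)<P(\mathcal H,k)$. I will take the natural cover on every edge other than $e$ and put a ``twisted'' family of maps on $e$. Throughout I assume $k\ge 2$, since the hypothesis is not meaningful for $k=1$. Write $r=n_e$ and label $e=\{v_1,\dots,v_r\}$ as in Lemma~\ref{lem1}, so that $v_1,v_2$ lie in the same component of $\mathcal H-e$ and $v_3,\dots,v_r$ each lie in their own component.

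\textbf{Step 1: the cover.} Let $\mathcal F'=\iota_{\mathcal H}(k)\setminus(\iota_{\mathcal H}(k))_e$ be the natural cover of $\mathcal H-e$. Its colorings are exactly the proper $k$-colorings of $\mathcal H-e$, so there are $P(\mathcal H-e,k)$ of them. For each $i\in\brk{k}$ define $\psi_i$ with domain $e$ by
\[
\psi_i(v_1)=i,\qquad \psi_i(v_2)=i+1 \pmod k,\qquad \psi_i(v_l)=i \text{ for } l\ge 3 .
\]
Distinct $\psi_i,\psi_{i'}$ give different colors to every vertex of $e$, so $\psi_i\cap\psi_{i'}=\emptyset$. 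Hence $\mathcal F=\mathcal F'\cup\{\psi_i:i\in\brk{k}\}$ is a perfect $k$-fold cover of $\mathcal H$.

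\textbf{Step 2: counting.} An $\mathcal F$-coloring is a proper $k$-coloring of $\mathcal H-e$ that avoids each $\psi_i$. The $k$ tuples $(\psi_i(v_1),\dots,\psi_i(v_r))$ are distinct, so the sets of excluded colorings are disjoint. Each excluded tuple has $i_1\neq i_2$ (because $k\ge2$), so by Lemma~\ref{lem1} it removes exactly $t_2$ colorings. This gives $P_{DP}(\mathcal H,\mathcal F)=P(\mathcal H-e,k)-k t_2$.

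For comparison, the natural cover on $e$ excludes the $k$ monochromatic tuples, each with $i_1=i_2$, so $P(\mathcal H,k)=P(\mathcal H-e,k)-k t_1$. This agrees with the formula for $t_1$ in Lemma~\ref{lem1}. It therefore suffices to show $t_2>t_1$.

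\textbf{Step 3: the inequality.} Substitute the formulas of Lemma~\ref{lem1} and multiply by $k^{r-1}(k-1)>0$. Then $t_2>t_1$ becomes
\[
k^{r-2}P(\mathcal H,k)+(1-k^{r-2})P(\mathcal H-e,k)>k^{r-2}(k-1)\bigl(P(\mathcal H-e,k)-P(\mathcal H,k)\bigr).
\]
Collecting terms, this simplifies to $k^{r-1}P(\mathcal H,k)>(k^{r-1}-1)P(\mathcal H-e,k)$, which is exactly the hypothesis. Hence $P_{DP}(\mathcal H,k)\le P_{DP}(\mathcal H,\mathcal F)<P(\mathcal H,k)$.

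The only delicate point is Step 1. The twisted maps must be pairwise disjoint, so that $\mathcal F$ is a genuine cover, and every excluded tuple must have $i_1\ne i_2$, so that the count $t_2$ from Lemma~\ref{lem1} applies uniformly to all $k$ tuples. The cyclic shift on $v_2$ alone achieves both. The rest is routine algebra.
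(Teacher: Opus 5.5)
The paper gives no proof of this lemma. It is imported from \cite{CWD261} and then used as a black box in Section~\ref{sec:Glu}, so there is no in-paper argument to compare yours against.

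Judged on its own, your proof is correct and complete:
\begin{itemize}
\item The cyclically shifted maps $\psi_i$ on $e$ are pairwise disjoint, so $\mathcal F$ is a genuine perfect $k$-fold cover.
\item Since $\psi_i(v_1)\neq\psi_i(v_2)$ for $k\ge 2$, Lemma~\ref{lem1} shows each $\psi_i$ removes exactly $t_2$ proper colorings of $\mathcal H-e$, and these removal sets are disjoint.
\item The natural maps on $e$ remove $t_1$ colorings each.
\item Your algebra correctly reduces $t_2>t_1$ to $(k^{n_e-1}-1)P(\mathcal H-e,k)<k^{n_e-1}P(\mathcal H,k)$, which is exactly the hypothesis. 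This equivalence is evidently why the hypothesis has that form.
\end{itemize}

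This is the same $t_1$/$t_2$ bookkeeping the paper itself uses in the proof of Lemma~\ref{lem_even_local}, so your route is the natural one.

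The only implicit point is your reading of $c(\mathcal H-e)=n_e-1$, which is the same as in Lemma~\ref{lem1}. You take it to mean that exactly $v_1,v_2$ share a component of $\mathcal H-e$ and every other vertex of $e$ lies in its own component, as happens when $\mathcal H$ is connected. That is the intended reading, and under it nothing is missing.
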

	
	\section{Joins of graphs}\label{sec:Join}

	Throughout this section, let $\mathcal{H}$ be a hypergraph with vertex set $V(\mathcal H)=\{v_i:i\in\brk n\}$ and edge set $E(\mathcal H)=\{e_{p}:p\in\brk m\}$. 
	Let $\mathcal M=K_1\vee\mathcal H$, and let $\omega$ denote the vertex corresponding to the copy of $K_1$. 
	For each $i\in\brk n$, denote $e_{m+i}=\{\omega,v_i\}$. Let
	$$
	\mathcal F = \left\{ \varphi_j^{(i)}(e_{m+i}): i\in\brk n,\;j\in\brk k,\;\varphi_j^{(i)}(\omega)=j\right\} \cup \left\{ \psi_j^{(p)}(e_{p}): p\in\brk m,\;j\in\brk k \right\}
	$$
	be a $k$-fold cover of $M$.
	
	For each $j\in\brk k$, define $\varPhi_j(\mathcal M,\mathcal F)=\left\{\varphi_j^{(i)}(e_{m+i}):i\in\brk n\right\}.$
	We say that $\varPhi_j(\mathcal M,\mathcal F)$ is a \emph{level map} if for every $p\in\brk m$, there exists $q_p\in\brk k$ such that
	$$
	\psi_{q_p}^{(p)}(v_i)=\varphi_j^{(i)}(v_i)
	$$
	for every $v_i\in e_{p}$.
	
	For each $j\in\brk k$, let
	$$
	\varPsi_j(\mathcal{H},\mathcal{F})=\left\{\psi_q^{(p)}(e_{n+p}):\forall p\in\brk{m},\;\exists q\in\brk{k}\text{ with }\psi_{q_p}^{(p)}(v_i)=\varphi_j^{(i)}(v_i)\right\}
	$$
	and define
	$$
	\mathcal F_j=\mathcal F-\varPsi_j(\mathcal H,\mathcal F)-\left\{\varphi_q^{(i)}(e_i):i\in\brk n,\;q\in\brk k\right\}.
	$$
	Then $\mathcal F_j$ is the cover of $\mathcal H$ induced by fixing the color $j$ at $\omega$.

	\begin{lemma}\label{Join-lemma1}
		If $P_{DP}(\mathcal{H},k) = P(\mathcal{H},k)$ for some hypergraphs $\mathcal{H}$ and $k \in \mathbb{N}$, then 
		$$P_{DP}(K_1 \vee \mathcal{H},k+1) = P(K_1 \vee \mathcal{H},k+1).$$
	\end{lemma}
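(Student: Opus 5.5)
The plan is to prove the two inequalities separately. The upper bound $P_{DP}(K_1\vee\mathcal H,k+1)\le P(K_1\vee\mathcal H,k+1)$ holds for every hypergraph, as noted in the introduction, so the whole task is the lower bound. By Lemma~\ref{K_1+H}, $P(K_1\vee\mathcal H,k+1)=(k+1)P(\mathcal H,k)$. It therefore suffices to show that every $(k+1)$-fold cover $\mathcal F$ of $\mathcal M=K_1\vee\mathcal H$ satisfies
$$P_{DP}(\mathcal M,\mathcal F)\ge (k+1)P(\mathcal H,k)=(k+1)P_{DP}(\mathcal H,k).$$

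Fix such a cover $\mathcal F$, written in the notation of this section, and split the $\mathcal F$-colorings $f$ of $\mathcal M$ according to the value $f(\omega)=j\in\brk{k+1}$. For a fixed $j$, the edge $e_{m+i}=\{\omega,v_i\}$ carries exactly one map whose value at $\omega$ is $j$, namely $\varphi_j^{(i)}$. Avoiding it means exactly that $f(v_i)\ne c_i:=\varphi_j^{(i)}(v_i)$. So the $\mathcal F$-colorings with $f(\omega)=j$ correspond bijectively to the maps $g\colon V(\mathcal H)\to\brk{k+1}$ that satisfy two conditions:
\begin{enumerate}
\item[(a)] $g(v_i)\ne c_i$ for every $i\in\brk n$;
\item[(b)] $g$ avoids every $\psi^{(p)}_q$.
\end{enumerate}

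Next, at each vertex $v_i$ I relabel the $k$ colors of $\brk{k+1}\setminus\{c_i\}$ bijectively onto $\brk k$. Any map $\psi_q^{(p)}$ taking the value $c_i$ at some $v_i\in e_p$ is automatically avoided because of (a), so it can be discarded. The surviving maps, transported through the relabeling, form a family $\mathcal F_j'$ of partial maps on $\mathcal H$ into $\brk k$. For each edge $e_p$ the surviving maps are still pairwise disjoint. Since at any fixed vertex of $e_p$ they take distinct values among only $k$ colors, there are at most $k$ of them. Hence $\mathcal F_j'$ is a (possibly non-perfect) $k$-fold cover of $\mathcal H$, and the number of $\mathcal F$-colorings of $\mathcal M$ with $f(\omega)=j$ equals $P_{DP}(\mathcal H,\mathcal F_j')$.

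The step needing care is the standing convention that covers are perfect, because $P_{DP}(\mathcal H,k)$ is a minimum over perfect covers. I would complete $\mathcal F_j'$ to a perfect cover $\mathcal F_j''$ as follows. For each edge $e_p$, at every vertex of $e_p$ the colors of $\brk k$ not used by the existing maps on $e_p$ are equal in number. Pairing them off arbitrarily across the vertices of $e_p$ produces additional disjoint maps with domain $e_p$, bringing the count to exactly $k$. Adding maps can only destroy colorings, so
$$P_{DP}(\mathcal H,\mathcal F_j')\ge P_{DP}(\mathcal H,\mathcal F_j'')\ge P_{DP}(\mathcal H,k)=P(\mathcal H,k).$$
Summing this over the $k+1$ choices of $j$ gives $P_{DP}(\mathcal M,\mathcal F)\ge (k+1)P(\mathcal H,k)$. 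Taking the minimum over $\mathcal F$ gives the lower bound, and together with the upper bound this proves the lemma.
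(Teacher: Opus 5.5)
Your proof is correct and follows the same route as the paper. You split the $\mathcal F$-colorings of $K_1\vee\mathcal H$ by the color $j$ at $\omega$, treat each class as the colorings of a $k$-fold cover $\mathcal F_j$ of $\mathcal H$ bounded below by $P_{DP}(\mathcal H,k)=P(\mathcal H,k)$, and sum using Lemma~\ref{K_1+H}. Your version is more careful than the written one in two places: you sum over all $k+1$ colors to get $(k+1)P(\mathcal H,k)=P(K_1\vee\mathcal H,k+1)$, where the paper's display runs only to $k$ (an off-by-one slip), and you justify the relabeling and the completion of $\mathcal F_j$ to a perfect cover, which the paper leaves implicit.
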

		\begin{proof}
		Suppose that $\mathcal{F}$ is a $(k+1)$-fold cover of $\mathcal{H}$. It is easily to know that for each $\mathcal{F}$-coloring of $\mathcal{H}$
		\begin{align}
			P_{DP}(K_1 \vee \mathcal{H},\mathcal{F})=\sum_{j = 1}^{k}P_{DP}(\mathcal{H},\mathcal{F}_j)\geq k P_{DP}(\mathcal{H},k)
	\end{align}
		Lemma \ref{K_1+H} implies that 
		\begin{align}
			kP_{DP}(\mathcal{H},k) = kP(\mathcal{H},k) = P(K_1 \vee \mathcal{H},k+1).
		\end{align}
		Together with $(1-2)$, we have that
		\begin{align}
			P_{DP}(K_1 \vee \mathcal{H},\mathcal{F})\geq P(K_1 \vee \mathcal{H},k+1).
		\end{align}
		
	It is obvious that
		\begin{align}
			P_{DP}(K_1 \vee\mathcal{H},k+1) \le P(K_1 \vee \mathcal{H},k+1)					\end{align}
		Since $\mathcal{F}$ is arbitrary, the result  is deduced by applying with $(3-4)$.
	\end{proof}

The following result is a generalization of Theorem \ref{result 1} obtained by Becker et al.	
	\begin{theorem}\label{th1}
		If $P_{DP}(K_p \vee \mathcal{H},k) = P(K_p \vee \mathcal{H},k)$ whenever $k \geq N$, then for any $l \geq N+1$ 
		$$P_{DP}(K_{p+1} \vee \mathcal{H}, l) = P(K_{p+1} \vee \mathcal{H},l) \mbox{ and }\tau(K_{p+1} \vee \mathcal{H}) \le \tau(K_{p}\vee\mathcal{H}) + 1.$$
	\end{theorem}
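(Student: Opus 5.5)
The plan is to reduce Theorem \ref{th1} to Lemma \ref{Join-lemma1} through the identity $K_{p+1}\vee\mathcal{H} = K_1\vee(K_p\vee\mathcal{H})$. Adding one more vertex $\omega$ adjacent by $2$-edges to every vertex of $K_p\vee\mathcal{H}$ gives exactly the complete graph $K_{p+1}$ joined to $\mathcal{H}$. The vertex sets agree, and the edge sets agree: the edges of $\mathcal{H}$, all pairs inside the clique, and all pairs between the clique and $V(\mathcal{H})$. I will verify this identification first. It is a direct check from the definition of $\mathcal{H}\vee K_1$ given in the introduction, applied with $K_p\vee\mathcal{H}$ in the role of the hypergraph.

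Next, fix $l\ge N+1$ and put $k=l-1\ge N$. By hypothesis, $P_{DP}(K_p\vee\mathcal{H},k)=P(K_p\vee\mathcal{H},k)$. Applying Lemma \ref{Join-lemma1} to the hypergraph $\mathcal{H}'=K_p\vee\mathcal{H}$ then gives
$$P_{DP}(K_1\vee\mathcal{H}',k+1)=P(K_1\vee\mathcal{H}',k+1),$$
that is, $P_{DP}(K_{p+1}\vee\mathcal{H},l)=P(K_{p+1}\vee\mathcal{H},l)$. Since $l\ge N+1$ was arbitrary, this proves the first assertion. Here I rely on Lemma \ref{Join-lemma1} as stated, for arbitrary hypergraphs. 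If its proof needs repair (its displayed sum should run over $k+1$ colors with each induced cover $\mathcal{F}_j$ a $k$-fold cover of $\mathcal{H}$), the underlying argument is still sound. Once $\omega$ receives color $j$, each vertex $v_i$ loses at most one color, namely the one matched to $j$ along the edge $\omega v_i$. This leaves a $k$-fold cover of $\mathcal{H}'$ after relabeling, and combining Lemma \ref{K_1+H} with $P_{DP}\le P$ gives the lower bound $(k+1)P_{DP}(\mathcal{H}',k)=(k+1)P(\mathcal{H}',k)=P(K_1\vee\mathcal{H}',k+1)$.

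For the threshold statement, set $N=\tau_{DP}(K_p\vee\mathcal{H})$. If $N=\infty$ there is nothing to prove. Otherwise the hypothesis holds with this $N$, so equality holds for all $l\ge N+1$. It remains to check that $N+1\ge\chi(K_{p+1}\vee\mathcal{H})$, so that $N+1$ is admissible in the definition of $\tau_{DP}$. This follows because $\chi(K_{p+1}\vee\mathcal{H})=\chi(K_p\vee\mathcal{H})+1\le N+1$: the new vertex $\omega$ is adjacent to everything, so it needs a fresh color and exactly one more color suffices. Hence $\tau_{DP}(K_{p+1}\vee\mathcal{H})\le N+1$.

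The main obstacle is only the bookkeeping: confirming that the join decomposes as stated, so that Lemma \ref{Join-lemma1} applies verbatim with $\mathcal{H}'$ in place of $\mathcal{H}$, and handling the chromatic-number condition in the definition of the threshold. Everything else is an immediate induction step.
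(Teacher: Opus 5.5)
Your proposal is correct and follows the paper's proof. Both rest on the identity $K_{p+1}\vee\mathcal{H}=K_1\vee(K_p\vee\mathcal{H})$ and apply Lemma~\ref{Join-lemma1} to $K_p\vee\mathcal{H}$ at $k=l-1\ge N$. Your extra checks are also correct and fill in details the paper skips: that $\chi(K_{p+1}\vee\mathcal{H})=\chi(K_p\vee\mathcal{H})+1\le N+1$, which legitimizes the threshold bound, and that the sum in the proof of Lemma~\ref{Join-lemma1} must run over all $k+1$ colors of $\omega$ to match $P(K_1\vee\mathcal{H},k+1)=(k+1)P(\mathcal{H},k)$.
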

	\begin{proof}
		Assume that $P_{DP}(K_p \vee \mathcal{H},k) = P(K_p \vee \mathcal{H},k)$ whenever $k \geq \mathbb{N}$. Lemma \ref{Join-lemma1} implies that 
		$$
		P_{DP}(K_1\vee(K_p \vee \mathcal{H}), k+1) = P(K_1\vee(K_p \vee \mathcal{H}), k+1).
		$$
		And, it is clear that 
		$$K_1\vee(K_p \vee \mathcal{H})  =K_{p+1} \vee \mathcal{H}.$$
		 Thus, the result immediately follows.
	\end{proof}
	
	\begin{lemma}\label{lem_odd}
		Suppose $\mathcal H$ is a unicyclic linear $r$-uniform hypergraph with $m+\lambda$ edges, and its unique cycle is of length $\lambda$ for $\lambda\ge 3$, $r\ge 3$ and $m\ge 0$. Then for any odd $\lambda\ge 3$ 
		$$P_{DP}(K_p\vee\mathcal{H},k) = P(K_p\vee\mathcal{H},k)\mbox{ and }\tau_{DP}(K_p\vee\mathcal{H}) = \chi(K_p\vee\mathcal{H})=p+2.$$
	\end{lemma}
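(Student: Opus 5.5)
We argue by induction on $p$, using Lemma \ref{P_DP(C)}(1) as the base input and Lemma \ref{Join-lemma1} (equivalently Theorem \ref{th1}) as the inductive engine. For $p=0$ we read $K_0\vee\mathcal H=\mathcal H$. Since $\lambda$ is odd, Lemma \ref{P_DP(C)}(1) gives $P_{DP}(\mathcal H,k)=P(\mathcal H,k)$ for every $k\in\mathbb N$. Suppose now that $P_{DP}(K_p\vee\mathcal H,k)=P(K_p\vee\mathcal H,k)$ holds for all $k\in\mathbb N$. Apply Lemma \ref{Join-lemma1} to the hypergraph $K_p\vee\mathcal H$ and use the identity $K_1\vee(K_p\vee\mathcal H)=K_{p+1}\vee\mathcal H$. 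This gives $P_{DP}(K_{p+1}\vee\mathcal H,k+1)=P(K_{p+1}\vee\mathcal H,k+1)$ for every $k\ge 1$, so the equality holds for all $k\ge 2$.

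For $k=1$ we need a separate check. $K_{p+1}\vee\mathcal H$ has at least one graph edge once $p+1\ge 1$, so no $1$-coloring avoids the cover. Hence both $P_{DP}$ and $P$ vanish at $k=1$, because a single-color assignment kills every edge map. One could also allow $k=0$ in Lemma \ref{Join-lemma1}, where both sides are trivially $0$. Either way the equality holds for all $k\in\mathbb N$, which completes the induction.

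For the threshold we compute $\chi(K_p\vee\mathcal H)$. An $r$-uniform hypergraph with $r\ge 3$ is $2$-colorable as long as it is nonempty. Indeed, Lemma \ref{lem:2-3} gives $P(\mathcal H,1)=0$, and $P(\mathcal H,2)=(2^{r-1}-1)^{m+\lambda}-(2^{r-1}-1)^m>0$ for odd $\lambda$ since $2^{r-1}-1\ge 3$. So $\chi(\mathcal H)=2$. By repeated use of Lemma \ref{K_1+H}, $P(K_p\vee\mathcal H,k)=k(k-1)\cdots(k-p+1)P(\mathcal H,k-p)$. This vanishes for $k\le p+1$ and is positive at $k=p+2$, so $\chi(K_p\vee\mathcal H)=p+2$. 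Since equality $P_{DP}=P$ holds for all $k$, the smallest $N\ge\chi$ in the definition of $\tau_{DP}$ is $\chi$ itself, giving $\tau_{DP}(K_p\vee\mathcal H)=p+2$.

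The main point needing care is that Lemma \ref{Join-lemma1} really applies with the hypothesis valid at every $k$, including small $k$ where both sides may vanish. The sum decomposition in its proof, over colors at $\omega$, is valid for any cover, so no threshold issue arises. Once that is checked, the rest is bookkeeping.
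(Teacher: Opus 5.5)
Your proof is correct and follows the paper's route: induction on $p$ with Lemma~\ref{P_DP(C)}(1) as input, Lemma~\ref{Join-lemma1} applied to $K_p\vee\mathcal H$ via $K_1\vee(K_p\vee\mathcal H)=K_{p+1}\vee\mathcal H$ as the step, and the sandwich $0\le P_{DP}\le P=0$ at $k=1$. You start the induction at $p=0$ rather than $p=1$, and you verify $\chi(K_p\vee\mathcal H)=p+2$, which the paper leaves implicit.

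One small correction: the blanket claim that every nonempty $r$-uniform hypergraph with $r\ge 3$ is $2$-colorable is false (the Fano plane is a counterexample). However, your actual justification, via Lemma~\ref{lem:2-3} and $P(\mathcal H,2)>0$, is specific to this $\mathcal H$ and suffices.
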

	
	\begin{proof}
		From Lemma \ref{P_DP(C)}, when $\lambda$ is odd,
		$$
		P_{DP}(\mathcal H,k)=P(\mathcal H,k).
		$$
		
		Now, we will prove the desired by induction on $p$. When $p =1$, Lemma \ref{Join-lemma1} implies that for any $k \geq 2$ $$P_{DP}(K_1\vee\mathcal{H},k) = P(K_1\vee\mathcal{H},k).$$ The basic step is completed since $0 \le P_{DP}(K_1 \vee \mathcal{H},1) \le P(K_1 \vee \mathcal{H},1)=0 $.
		
		Now, assume that $p \geq 2$ and the desired result holds for all positive integers less than $p$. This means that for any $k \in \mathbb{N}$
		$$
		P_{DP}(K_{p-1}\vee\mathcal{H},k) = P(K_{p-1}\vee\mathcal{H},k).
		$$

		 Lemma \ref{Join-lemma1} implies that for any $k\ge 2$
		$$  
		P_{DP}(K_{p} \vee \mathcal{H}, k) = P_{DP}\bigl(K_1 \vee (K_{p-1} \vee\mathcal{H}), k-1\bigr) = P\bigl(K_1\vee (K_{p-1} \vee\mathcal{H}), k-1\bigr) = P(K_p \vee \mathcal{H}, k).
		$$
	 The desired result holds since
		$$
		0 \leq P_{DP}(K_p \vee \mathcal{H}, 1) \leq P(K_p \vee \mathcal{H}, 1) = 0.
		$$
		This completes the induction step.
		
	\end{proof}

		\begin{lemma}\label{lem_even_local}
		Suppose that $\mathcal H$ is a unicyclic linear $r$-uniform hypergraph with $m+\lambda$ edges, and its unique cycle $C$ has length $\lambda$, where $m\ge 0$, $r\ge 3$, and $\lambda\ge 4$ is even. Let $\mathcal M=K_1\vee\mathcal H$, and let $\mathcal F$ be an arbitrary $k$-fold cover of $\mathcal M$, where $k\ge 3$. Let $s$ be the number of level maps among $\{\varPhi_j(\mathcal M,\mathcal F):j\in\brk k\}$ where $\varPhi_j(\mathcal M,\mathcal F)$ is given above in this Section for each $j\in\brk k$.		
		For any $e\in E(C)$, let
		$$
		t_0=P(\mathcal H-e,k-1),
		$$
		$$
		t_1=\frac{1}{k-1}\left(P(\mathcal H-e,k-1)-P(\mathcal H,k-1)\right),
		$$
		and
		$$
		t_2=\frac{(k-1)^{r-2}P(\mathcal H,k-1)+(1-(k-1)^{r-2})P(\mathcal H-e,k-1)}{(k-1)^{r-1}(k-2)}.
		$$
		For each $j\in\brk k$, let $N_j$ denote the number of $\mathcal F$-colorings of $M$ containing $(\omega,j)$. Then if $\varPhi_j(\mathcal M,\mathcal F)$ is a level map, 
		$$
		N_j\ge t_0-(s-1)t_1-(k-s)t_2,
		$$
		
	\noindent	otherwise
		$$
		N_j\ge t_0-st_1-(k-2-s)t_2.
		$$
	\end{lemma}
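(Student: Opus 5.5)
Fix $j\in\brk k$. The plan is to describe the $\mathcal F$-colorings of $\mathcal M$ containing $(\omega,j)$ as the $\mathcal F_j$-colorings of $\mathcal H$, where each $v_i$ is restricted to the $k-1$ colors outside $\varphi_j^{(i)}(v_i)$. After relabelling the remaining colors at each vertex as $\brk{k-1}$, $\mathcal F_j$ becomes a $(k-1)$-fold cover of $\mathcal H$; an edge $e_p$ contributes at most $k$ maps and loses those $\psi_q^{(p)}$ that meet a forbidden color. So $N_j=P_{DP}(\mathcal H,\mathcal F_j)$, a count of colorings of $\mathcal H$ with $k-1$ colors per vertex and $k$ forbidden maps per edge, some of which are now irrelevant.

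\textbf{Step 1 (reduce to the tree).} Fix an edge $e\in E(C)$. Then $\mathcal H-e$ is a hypertree with $c((\mathcal H-e)-\emptyset)$ and $e$ meets $r-1$ components in the sense of Lemma \ref{lem1}. By Lemma \ref{lem_nature}, the restriction of $\mathcal F_j$ to $\mathcal H-e$ is equivalent, after a bijective recolouring vertex by vertex, to the natural cover. Hence the number of its colorings is $P(\mathcal H-e,k-1)=t_0$. The recolouring can be chosen so that the edges of the tree become "natural" (each surviving forbidden map is monochromatic). A subtle point is that a tree edge may retain fewer than $k-1$ effective maps, which only increases the count. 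I would state this as: the number of colorings avoiding the tree edges is at least $t_0$.

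\textbf{Step 2 (subtract the edge $e$).} After the recolouring, the $k$ maps of $\mathcal F$ on $e$ become at most $k$ maps $\psi$ on $e$ in the new colour labels; maps hitting the forbidden colors vanish. Each surviving $\psi$ forbids colorings $f$ with $f|_e=\psi$. By Lemma \ref{lem1} (applied with $k-1$ colours), this removes exactly $t_1$ colorings if $\psi(v_1)=\psi(v_2)$ after recolouring, and $t_2$ otherwise, where $v_1,v_2$ are the two vertices of $e$ in the same component of $\mathcal H-e$. Because the maps on $e$ are disjoint, the removed sets are disjoint, giving
$$N_j\ge t_0-a_jt_1-b_jt_2,$$
where $a_j$ and $b_j$ count the surviving "diagonal" and "off-diagonal" maps.

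\textbf{Step 3 (count via level maps).} This is the main obstacle: relating $a_j$ and $b_j$ to $s$. The idea is as follows. The recoloured $\psi$ on $e$ that came from level map index $j'$ is diagonal exactly when $\Phi_{j'}$ is a level map, where the recolouring is the one propagating along the cycle path from $v_1$ to $v_2$. It is non-diagonal otherwise, with consistency around the even cycle.
\begin{itemize}
\item If $\Phi_j$ itself is a level map, its own $\psi$'s are exactly the removed $\Psi_j$. This leaves at most $s-1$ diagonal and at most $k-s$ other maps, so $a_j\le s-1$ and $b_j\le k-s$.
\item If $\Phi_j$ is not a level map, at least two maps on $e$ are killed: one at $v_1$ and a different one at $v_2$, since the non-level condition forces them distinct. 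Hence $a_j+b_j\le k-2$ with $a_j\le s$.
\end{itemize}

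Since $t_1\ge t_2$ for $k\ge3$ (check from the formulas using Lemma \ref{lem:2-1} and Lemma \ref{lem:2-3}), replacing off-diagonal counts by diagonal ones only weakens the bound. This yields the two stated inequalities. I expect the careful verification of which maps die and of the diagonal/level-map correspondence to be the delicate part.
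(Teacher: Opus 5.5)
\textbf{Gap in Step 3.} Your set-up is the paper's: pass to $\mathcal F_j$, cut a cycle edge, reduce the forest part to a natural $(k-1)$-fold cover, and subtract $t_1$ or $t_2$ for each surviving map on the cut edge. The final step, however, runs in the wrong direction and rests on two false claims.

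First, $t_1\ge t_2$ is false. With $x=(k-1)^{r-1}-1$ and $\lambda$ even, the formulas give $t_1=\frac{x^{m+\lambda-1}-(k-2)x^m}{k-1}$ and $t_2=\frac{x^{m+\lambda-1}+x^m}{k-1}$, so $t_2-t_1=x^m>0$. This is exactly the even-cycle effect: a ``twisted'' map on the cut edge kills more colorings than a diagonal one. Hence $t_0-at_1-(l-a)t_2=t_0-lt_2+a(t_2-t_1)$ is \emph{increasing} in the number $a$ of diagonal maps, and what you need are \emph{lower} bounds on $a$.

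Second, your upper bounds $a_j\le s-1$ and $a_j\le s$ come from the claim that a surviving map is diagonal exactly when the corresponding $\varPhi_{j'}$ is level. Only the forward implication holds: levelness constrains all $r$ vertices of every edge, while diagonality only sees the transport along the $x$--$y$ path in $\mathcal H-e$. For example, take the natural cover on $E(\mathcal H)$ and $\varphi^{(i)}_{j}(v_i)=j$ for all $i,j$, except that colours $1$ and $2$ are swapped at a single vertex. Then $s=k-2$. Yet for every $j\ge 3$ the cover $\mathcal F_j$ is natural, all $k-1$ surviving maps on $e$ are diagonal, and $a_j=k-1>s-1$. Under your assumption $t_1\ge t_2$, your own bookkeeping would even put $N_j=t_0-(k-1)t_1$ below the claimed bound here.

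\textbf{What is needed.} Use only the forward implication. If $\varPhi_{j'}$ is level with $j'\ne j$, then on every edge the map taking the value $\varphi^{(i)}_{j'}(v_i)$ at each $v_i$ survives in $\mathcal F_j$. The relabeling carries a single label along all these maps through the component of $\mathcal H-e$ containing $x,y$, so the corresponding map on $e$ is diagonal, and distinct $j'$ give distinct maps. This gives $a\ge s-1$ with $l=k-1$ in the level case, and $a\ge s$ with $l\le k-2$ otherwise; then $t_2\ge t_1$ yields both inequalities. To invoke Lemma \ref{lem1} with exact counts, you should first complete the forest part to a full $(k-1)$-fold cover, as the paper does.

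The bound $l\le k-2$ also requires choosing $e$ depending on $j$. Two maps on $e$ die only if $\varPhi_j$ is non-level on $e$ itself, at some pair of vertices of $e$, not necessarily $v_1,v_2$. So the paper takes $e_1\in E(C)$ to be a cycle edge on which $\varPhi_j$ is non-level. In the example above with $j=1$ and the swapped vertex outside $e$, only one map dies on $e$, so $a_1+b_1=k-1$. If $\varPhi_j$ is non-level only on edges outside $C$, this choice is unavailable. The paper covers that case only with a brief relabeling remark, so you would need an argument there as well.
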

	\begin{proof}
		Fix $j\in\brk k$, and let $\mathcal F_j$ given above in this section. Let $N_j$ denote the number of $\mathcal F$-colorings of $\mathcal M$ containing $(\omega,j)$. By the definition of $\mathcal F_j$, $N_j$ is precisely the number of $\mathcal F_j$-colorings of $\mathcal H$.
		
		Since $\mathcal H$ is linear, any edge outside $C$ on which $\varPhi_j(\mathcal M,\mathcal F)$ is non-level can be made level by a suitable relabeling of the colors in $\mathcal F$ at vertices outside $C$. 
		We choose an edge $e_1\in E(C)$ as follows. 
		If $\varPhi_j(\mathcal M,\mathcal F)$ is not a level map, choose $e_1$ so that $\varPhi_j(\mathcal M,\mathcal F)$ is non-level on $e_1$; if $\varPhi_j(\mathcal M,\mathcal F)$ is a level map, choose $e_1$ arbitrarily from $E(C)$. 
		Let $l$ be the number of maps on $e_1$ remaining in $\mathcal F_j$. Then
		$$
		l=k-1
		$$
		when $\varPhi_j(\mathcal M,\mathcal F)$ is a level map, whereas
		$$
		l\le k-2
		$$
		when $\varPhi_j(\mathcal M,\mathcal F)$ is not a level map.
		
		For each edge of $\mathcal H-e_1$ on which fewer than $k-1$ maps remain in $\mathcal F_j$, add partial maps so as to obtain a $(k-1)$-fold cover of $\mathcal H-e_1$, denoted by $\mathcal F_j^T$. 
		Since $e_1$ lies on the unique cycle of $\mathcal H$, the hypergraph $\mathcal H-e_1$ is acyclic. 
		
		Hence, by Lemma \ref{lem_nature}, there exists a bijection between $\mathcal F_j^T$ and the $(k-1)$-fold cover $\iota_{\mathcal H-e_1}$.
		Thus, we can relabel the colors in $\mathcal F_j^T$ according to this bijection so that $\mathcal F_j^T$ becomes a natural $(k-1)$-fold cover of $\mathcal H-e_1$, denoted by $\mathcal F_j^{T,*}$.
		
		We now construct $\mathcal F_j^{T,*}$. 
		Consider a component of $\mathcal H-e_1$. 
		Choose an edge $e_2$ in this component and a vertex $w\in e_2$. 
		We relabel $\mathcal F^{T}_j$ of this component as follows. 
		For each $\beta\in\brk{k}\setminus\{j\}$, there exists a unique partial map $\psi_{\beta}^{(2)}\in\mathcal F_j^T$ with domain $e_2$ such that
		$$
		\psi_{\beta}^{(2)}(w)=\beta.
		$$
		Define the relabeling map
		$$
		\rho_{\beta}^{(2)}:\psi_{\beta}^{(2)}(e_2)\longrightarrow\brk{k}\setminus\{j\}
		$$
		by
		$$
		\rho_{\beta}^{(2)}\bigl(\psi_{\beta}^{(2)}(v)\bigr)=\beta
		$$
		for every $v\in e_2$.
		Suppose that $v\in e_2\cap e$ for some edge $e\ne e_2$. For each $\beta\in\brk{k}\setminus\{j\}$, let $\psi_{\beta}^{(e)}$ be the unique partial map on $e$ satisfying
		$$
		\psi_{\beta}^{(e)}(v)=\psi_{\beta}^{(2)}(v).
		$$
		We then set
		$$
		\rho_{\beta}^{(e)}\bigl(\psi_{\beta}^{(e)}(v)\bigr)=\beta.
		$$
		Since the component is connected, linear, and acyclic, there is an unrelabelled edge that meets the relabelled part in exactly one vertex. 
		We apply the same procedure to this edge. 
		Repeating this process, after finitely many steps all colors in the component are relabelled. 
		Applying the same relabeling procedure independently to each component of $\mathcal H-e_1$, we obtain the natural $(k-1)$-fold cover of $\mathcal H-e_1$, denoted by $\mathcal F_j^{T,*}$. In particular, it has exactly
		$$
		t_0=P(\mathcal H-e_1,k-1)
		$$
		$\mathcal F_j^{T,*}$-colorings.
		
		The above procedure determines a relabeling map $\rho_v$ for the colors at each vertex $v\in V(\mathcal H-e_1)$. Since
		$$
		V(\mathcal H-e_1)=V(\mathcal H),
		$$
		the same relabeling is defined at every vertex of $e_1$. For each partial map $\psi$ remaining on $e_1$, define the corresponding relabeled partial map $\psi^*$ by
		$$
		\psi^*(v)=\rho_v\bigl(\psi(v)\bigr)
		$$
		for every $v\in e_1$. 
		Let $x$ and $y$ be the two vertices of $e_1$ that lie in the same component of $\mathcal H-e_1$. Let $a$ be the number of the $l$ relabeled partial maps $\psi^*$ on $e_1$ satisfying
		$$
		\psi^*(x)=\psi^*(y).
		$$
		If $\varPhi_j(\mathcal M,\mathcal F)$ is a level map, then each of the other $s-1$ level maps gives a distinct surviving partial map on $e_1$. After the relabeling, these maps assign the same color to $x$ and $y$. Hence
		$$
		a\ge s-1.
		$$
		If $\varPhi_j(\mathcal M,\mathcal F)$ is not a level map, then all the $s$ level maps give distinct surviving partial maps of this type. Hence
		$$
		a\ge s.
		$$
	
		By Lemma \ref{lem1}, there are exactly $t_1$ $\mathcal F_j^{T,*}$-colorings of $\mathcal H-e_1$ that agree with the fixed partial map on $e_1$ when $x$ and $y$ receive the same color. If $x$ and $y$ receive different colors, the number is exactly $t_2$.
		As $\lambda$ is even, by Lemmas \ref{lem:2-1} and \ref{lem:2-3} we have that
		$$
		t_1=\frac{\left((k-1)^{r-1}-1\right)^{m+\lambda-1}-(k-2)\left((k-1)^{r-1}-1\right)^m}{k-1}
		$$
		and
		$$
		t_2=\frac{\left((k-1)^{r-1}-1\right)^{m+\lambda-1}+\left((k-1)^{r-1}-1\right)^m}{k-1}.
		$$
	
		Suppose first that $\varPhi_j(\mathcal M,\mathcal F)$ is a level map. 
		In this case every edge of $\mathcal H$ has exactly $k-1$ maps remaining in $\mathcal F_j$, so no completion is required, and $l=k-1$. 
		Among the $k-1$ maps on $e_1$, exactly $a$ have the same color on $x$ and $y$, while the remaining $k-1-a$ have different colors on $x$ and $y$. Hence
		$$
		\begin{aligned}
			N_j
			&=t_0-at_1-(k-1-a)t_2\\
			&=t_0+a(t_2-t_1)-(k-1)t_2\\
			&\ge t_0+(s-1)(t_2-t_1)-(k-1)t_2\\
			&=t_0-(s-1)t_1-(k-s)t_2.
		\end{aligned}
		$$
		
		Now suppose that $\varPhi_j(\mathcal M,\mathcal F)$ is not a level map. 
		The cover $\mathcal F_j^T$ is obtained from the restriction of $\mathcal F_j$ to $\mathcal H-e_1$ by adding maps. 
		Thus every coloring admitted after the completion is also admitted by $\mathcal F_j$. 
		Among the $l$ surviving maps on $e_1$, exactly $a$ have the same color on $x$ and $y$. It follows that
		$$
		\begin{aligned}
			N_j
			&\ge t_0-at_1-(l-a)t_2\\
			&=t_0+a(t_2-t_1)-lt_2\\
			&\ge t_0+s(t_2-t_1)-(k-2)t_2\\
			&=t_0-st_1-(k-2-s)t_2,
		\end{aligned}
		$$
		where the second inequality follows from $a\ge s$, $l\le k-2$, and $t_2\ge t_1$. This completes the proof.
	\end{proof}
	
	\begin{corollary}\label{cor_even}
	Suppose that $\mathcal H$ is a unicyclic linear $r$-uniform hypergraph with $m+\lambda$ edges, and its unique cycle $C$ has length $\lambda$, where $m\ge 0$, $r\ge 3$, and $\lambda\ge 4$ is even. Let $\mathcal M=K_1\vee\mathcal H$, and let $\mathcal F$ be an arbitrary $k$-fold cover of $\mathcal M$, where $k\ge 3$. Let $s$ be the number of level maps among $\{\varPhi_j(\mathcal M,\mathcal F):j\in\brk k\}$ where $\varPhi_j(\mathcal M,\mathcal F)$ is given in this Section for $j\in\brk k$.
	\begin{align*}
		&P_{DP}(\mathcal M,k)\geq  \\
		&\min_{s\in\{0\}\cup(\brk{k}\setminus\{k-1\})}
		\left\{s\bigl(t_0-(s-1)t_1-(k-s)t_2\bigr)+(k-s)\bigl(t_0-st_1-(k-2-s)t_2\bigr)\right\}.
	\end{align*}
\end{corollary}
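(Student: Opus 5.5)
The plan is to sum the pointwise bounds of Lemma~\ref{lem_even_local} over the colors at $\omega$, and then minimize over the number $s$ of level maps. Fix an arbitrary $k$-fold cover $\mathcal F$ of $\mathcal M$ with $k\ge 3$. Every $\mathcal F$-coloring of $\mathcal M$ assigns exactly one color $j\in\brk k$ to $\omega$, so
$$
P_{DP}(\mathcal M,\mathcal F)=\sum_{j=1}^{k}N_j .
$$
Let $s$ be the number of level maps among the $\varPhi_j(\mathcal M,\mathcal F)$. We split this sum into the $s$ indices $j$ for which $\varPhi_j(\mathcal M,\mathcal F)$ is a level map and the $k-s$ remaining indices. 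Applying the two cases of Lemma~\ref{lem_even_local} termwise gives
$$
P_{DP}(\mathcal M,\mathcal F)\ge s\bigl(t_0-(s-1)t_1-(k-s)t_2\bigr)+(k-s)\bigl(t_0-st_1-(k-2-s)t_2\bigr).
$$
The quantities $t_0,t_1,t_2$ depend only on $\mathcal H$ and $k$, and not on $\mathcal F$ or on the choice of $e\in E(C)$, by the explicit formulas from Lemmas~\ref{lem:2-1} and \ref{lem:2-3}. Hence this right-hand side is a function of $s$ alone.

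Next we determine which values of $s$ can actually occur, so that taking the minimum over the indicated index set is legitimate. Trivially $0\le s\le k$. The excluded value is $s=k-1$, and the plan is to show it is impossible. Suppose all but one $\varPhi_{j_0}$ are level maps. Fix an edge $e_p$ and a vertex $v_i\in e_p$. For each $j\ne j_0$ there is a map $\psi^{(p)}_{q_p(j)}$ agreeing with $\varphi^{(i)}_j$ on $e_p$. The maps $\varphi^{(i)}_j$, $j\in\brk k$, give $k$ distinct colors at $v_i$, so the indices $q_p(j)$ are distinct. Thus $k-1$ of the $k$ disjoint maps on $e_p$ are used, and the remaining map $\psi$ must take, at each vertex $v_i\in e_p$, the single color not already used, which is $\varphi^{(i)}_{j_0}(v_i)$. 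Therefore $\varPhi_{j_0}$ is level on $e_p$ as well. As $p$ is arbitrary, $\varPhi_{j_0}$ is a level map, contradicting $s=k-1$. This uses the perfectness convention $|\mathcal F_{e_p}|=k$, together with the disjointness of maps on the same edge and of the $\varphi^{(i)}_j$ at $v_i$. So $s\in\{0\}\cup(\brk k\setminus\{k-1\})$.

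Finally, since $P_{DP}(\mathcal F)$ is at least the bound at its own $s$, and that $s$ lies in the admissible set, $P_{DP}(\mathcal M,\mathcal F)$ is at least the minimum over the set. Because $\mathcal F$ is arbitrary, taking the minimum over all covers yields the stated bound on $P_{DP}(\mathcal M,k)$. The main obstacle is the counting step excluding $s=k-1$: the argument must check that the pigeonhole works edge by edge. It should also handle edges outside $C$ consistently with the relabeling remark in Lemma~\ref{lem_even_local}, which is harmless here since the definition of a level map is edge-local. The boundary cases $s=0$ and $s=k$ need only the observation that the corresponding sum has an empty part.
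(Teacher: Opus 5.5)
Your proposal is correct and follows the paper's proof essentially step for step. You write $P_{DP}(\mathcal M,\mathcal F)=\sum_j N_j$, apply the two cases of Lemma~\ref{lem_even_local} to the $s$ level and $k-s$ non-level indices, and exclude $s=k-1$ because $k-1$ level maps use up $k-1$ of the $k$ disjoint maps on each edge, which forces the last one to be level as well; your edge-by-edge pigeonhole (distinctness of the $q_p(j)$ via disjointness at $v_i$) simply spells out the paper's one-line justification more carefully.
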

\begin{proof}
	Let $\mathcal F$ be an arbitrary $k$-fold cover of $\mathcal M$, and let $s$ be the number of level maps among $\{\varPhi_j(\mathcal M,\mathcal F):j\in\brk k\}$. For each $j\in\brk k$, let $N_j$ be the number of $\mathcal F$-colorings of $\mathcal M$ containing $(\omega,j)$. Since every $\mathcal F$-coloring of $\mathcal M$ contains exactly one color at $\omega$,
	$$
	P_{DP}(\mathcal M,\mathcal F)=\sum_{j\in\brk k}N_j.
	$$
	By Lemma \ref{lem_even_local}, there are $s$ indices $j$ for which
	$$
	N_j\geq t_0-(s-1)t_1-(k-s)t_2,
	$$
	and $k-s$ indices $j$ for which
	$$
	N_j\geq t_0-st_1-(k-2-s)t_2.
	$$
	Hence
	$$
	P_{DP}(\mathcal M,\mathcal F)\geq s\bigl(t_0-(s-1)t_1-(k-s)t_2\bigr)+(k-s)\bigl(t_0-st_1-(k-2-s)t_2\bigr).
	$$
	
	Moreover, $s\neq k-1$. Indeed, if $k-1$ of the maps $\varPhi_j(\mathcal M,\mathcal F)$ are level maps, then on each edge of $\mathcal H$ they correspond to $k-1$ distinct maps of the cover, and the remaining colors necessarily form the remaining map. Thus the last map is also a level map. Therefore,
	$$
	s\in\{0\}\cup(\brk{k}\setminus\{k-1\}).
	$$
	Since $\mathcal F$ was arbitrary, the proof is completed.
\end{proof}

The following theorem is a generalization of Theorem \ref{result 2} proved by Becker et al.
\begin{theorem}
	Suppose $\mathcal H$ is a unicyclic linear $r$-uniform hypergraph with $m+\lambda$ edges, and its unique cycle $C$ is of length $\lambda$ for $m\ge 0$, $r\ge 3$, $\lambda\ge 4$. Then for any $k \in \mathbb{N}$, 
	$$ P_{DP}(K_1\vee \mathcal{H}, k) = P(K_1\vee \mathcal{H}, k).$$
\end{theorem}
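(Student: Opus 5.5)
The plan is to split on the parity of $\lambda$ and to treat small $k$ directly. If $\lambda$ is odd, the claim is Lemma~\ref{lem_odd} with $p=1$. So assume from now on that $\lambda\ge 4$ is even.

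For $k\in\{1,2\}$ we argue directly. Every edge of $\mathcal H$ contains two vertices of different colours in a proper colouring, so $\mathcal H$ needs at least $2$ colours. The vertex $\omega$ is adjacent to every vertex of $\mathcal H$, so it needs a further colour. Hence $\chi(K_1\vee\mathcal H)\ge 3$, and
$$0\le P_{DP}(K_1\vee\mathcal H,k)\le P(K_1\vee\mathcal H,k)=0 .$$
Since $P_{DP}\le P$ always holds, it remains to prove $P_{DP}(K_1\vee\mathcal H,k)\ge P(K_1\vee\mathcal H,k)$ for $k\ge 3$.

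For $k\ge3$ I would apply Corollary~\ref{cor_even}. The first step is to simplify the bound
$$f(s)=s\bigl(t_0-(s-1)t_1-(k-s)t_2\bigr)+(k-s)\bigl(t_0-st_1-(k-2-s)t_2\bigr).$$
The coefficient of $t_1$ is $s(s-1)+s(k-s)=s(k-1)$. The coefficient of $t_2$ is $(k-s)(s+k-2-s)=(k-s)(k-2)$. So $f$ is linear in $s$:
$$f(s)=kt_0-k(k-2)t_2+s\bigl((k-2)t_2-(k-1)t_1\bigr).$$
At $s=k$ this gives $f(k)=k\bigl(t_0-(k-1)t_1\bigr)$. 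By the definition of $t_1$, $t_0-(k-1)t_1=P(\mathcal H,k-1)$. Lemma~\ref{K_1+H} then gives $f(k)=kP(\mathcal H,k-1)=P(K_1\vee\mathcal H,k)$.

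The key step, and the main obstacle, is to show that the minimum of $f$ over $s\in\{0\}\cup(\brk{k}\setminus\{k-1\})$ is attained at $s=k$. Because $f$ is linear in $s$, it suffices to show that the slope $(k-2)t_2-(k-1)t_1$ is negative. Put $A=(k-1)^{r-1}-1$ and $M=m+\lambda-1$. The explicit formulas for $t_1,t_2$ obtained in the proof of Lemma~\ref{lem_even_local} give
$$(k-2)t_2-(k-1)t_1=\frac{(k-2)(A^M+A^m)-(k-1)A^M+(k-1)(k-2)A^m}{k-1}=\frac{A^m\bigl(k(k-2)-A^{\lambda-1}\bigr)}{k-1}.$$
Since $r\ge3$ and $\lambda\ge4$, we have $A\ge (k-1)^2-1=k(k-2)$ and $A\ge 3$. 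Hence $A^{\lambda-1}\ge A^3>k(k-2)$, so the slope is strictly negative. Therefore $f$ is decreasing in $s$, and its minimum over the admissible values of $s$ is $f(k)$.

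Combining this with Corollary~\ref{cor_even} yields $P_{DP}(K_1\vee\mathcal H,k)\ge P(K_1\vee\mathcal H,k)$ for $k\ge 3$. Together with $P_{DP}\le P$, this gives equality for every $k\in\mathbb N$.
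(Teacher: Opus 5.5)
Your proposal is correct and follows the paper's proof step by step: the odd case comes from Lemma~\ref{lem_odd}, the cases $k\le 2$ from $\chi(K_1\vee\mathcal H)\ge 3$, and the case of even $\lambda$ with $k\ge 3$ from the bound of Corollary~\ref{cor_even}, which is linear in $s$ with slope $A^m\bigl(k(k-2)-A^{\lambda-1}\bigr)/(k-1)<0$, so the minimum is attained at $s=k$. Your evaluation $f(k)=k\bigl(t_0-(k-1)t_1\bigr)=kP(\mathcal H,k-1)=P(K_1\vee\mathcal H,k)$ directly from the definition of $t_1$ is a cleaner finish than the paper's explicit expansion, which contains a typo, but it is the same argument.
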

\begin{proof}
	When $k \geq 3$, from corollary \ref{cor_even}, $P_{DP}(\mathcal M,k)$ is at least
	$$
	\min_{s \in \{0\} \cup \{ \brk{k}-\{k-1\}\}}s(t_0- (s- 1)t_1- (k- s)t_2)+(k-s)(t_0- st_1- (k- 2-s)t_2)
	$$
	when $p$ is even. We now compete:
	\begin{align*}
		&s(t_0- (s- 1)t_1- (k- s)t_2)+(k-s)(t_0- st_1- (k- 2-s)t_2) \\
		&=s(t_1(1-k)+t_2(k-2))+(kt_0-k^2t_2+2kt_2) \\
		&=\frac{s((k-1)^{r-1}-1)^{m}}{k-1} (k(k-2)-((k-1)^{r-1}-1)^{\lambda-1})+(kt_0-k^2t_2+2kt_2).
	\end{align*}
	Since $k(k-2)-((k-1)^{r-1}-1)^{\lambda-1} \le 0$ when $r \geq 3, k \geq 3$, the minimum is attained at $s=k$. Hence,
	\begin{align*}
		&\min_{s \in \{0\} \cup \{ \brk{k}-\{k-1\}\}}s(t_0- (s- 1)t_1- (k- s)t_2)+(k-s)(t_0- st_1- (k- 2-s)t_2) \\
		&=k(t_0 - (k-1)t_1)\\
		&=k((k^{r-1}-1)^{m+\lambda}+(k-2)((k-1)^{k-1}-1)^{m})\\
		&=P(K_1 \vee \mathcal{H},k).
	\end{align*}
	For $k \in \brk{2}$, 
	$$0 \le P_{DP}(K_1 \vee \mathcal{H},k) \le P(K_1\vee \mathcal{H},k) =0.$$
	 Combining Lemma \ref{lem_odd}, we complete the proof of this theorem.
\end{proof}

		\section{Gluings of graphs}\label{sec:Glu}
	
	We now introduce a definition that generalizes the notion of a clique-sum for graphs. 
	Let $\mathcal{H}_1$ and $\mathcal{H}_2$ be hypergraphs such that $V(\mathcal{H}_1)\cap V(\mathcal{H}_2)\neq\varnothing$, and let $\mathcal{H}'=\mathcal{H}_1\cap\mathcal{H}_2.$
	Suppose that there exist polynomials $A_i(x)$ such that $P(\mathcal{H}_i,x)=P(\mathcal{H}',x)A_i(x)$ for $i\in \{1,2\}$. 
	For each $\theta \in \operatorname{Aut}(\mathcal{H}')$ the \emph{$(\mathcal{H}',\theta)$-sum} of $\mathcal{H}_1$ and $\mathcal{H}_2$, denote by $\mathcal{H}_1 \bigoplus_{\theta} \mathcal{H}_2$, is the hypergraph which obtained from disjoint copies of $\mathcal{H}_1$ and $\mathcal{H}_2$ by identifying each vertex $v$ of $\mathcal{H}'$ in $\mathcal{H}_1$ with $\theta(v)$ in $\mathcal{H}_2$.
	And the family
	$$
	\mathcal H_1\bigoplus\mathcal H_2 = \left\{ \mathcal{H}_1 \bigoplus_{\theta} \mathcal{H}_2: \theta \in \operatorname{Aut}(\mathcal{H}')
	\right\}.
	$$
	is called the \emph{$\mathcal{H}'$-sum family} of $\mathcal H_1$ and $\mathcal H_2$.
	
	The \emph{complete $r$-uniform hypergraph} on $p$ vertices, denoted by $K_p^r$, is the $r$-uniform hypergraph on a $p$-element vertex set in which every $r$-element subset forms an edge. 
	If $\mathcal{H}'=K_p^r,$ then we call $\mathcal{H}_1 \bigoplus \mathcal{H}_2$ a \emph{$K_p^r$-gluing} of $\mathcal{H}_1$ and $\mathcal{H}_2$. 
	Let $\bigoplus_{i=1}^n(\mathcal{H}_i,p,r)$ denote the family of all $K_p^r$-gluings of $\mathcal{H}_1,\mathcal{H}_2,\ldots,\mathcal{H}_n$, where $p\ge r$. 
	In particular, $\bigoplus_{i=1}^n(\mathcal{H}_i,1,1)$ denotes the family of all one-vertex gluings of $\mathcal{H}_1,\mathcal{H}_2,\ldots,\mathcal{H}_n$.
	
	Let $G_2$ is an arbitrary graph and $G_1 = C_{2k+2}.$ Suppose $G_1$ and $G_2$ share exactly two vertices and one edge. 
	Kaul and Mudrock \cite{KM21} proved that for $k\ge \max\{2,\chi(G_2)\}$
	$$P_{DP}(G,k) < P(G, k).$$
	We generalize this result as follows.

		\begin{theorem}
		Suppose that $\mathcal{H}_1$ is an arbitrary hypergraph and $\mathcal{H}_2$ is an $r$-uniform linear unicycle hypergraph consisting of $m + \iota$ edges, where $\iota$ denotes the length of the cycle. Assume that \(V(\mathcal{H}_1) \cap V(\mathcal{H}_2) \neq \emptyset\). Let $ \mathcal{H}' = \mathcal{H}_1 \cap \mathcal{H}_2$ and let $\mathcal{H} \in \mathcal{H}_1 \bigoplus \mathcal{H}_2$. If $k \ge \max \left\{ \chi(\mathcal{H}_1), \chi(\mathcal{H}_2) \right\}$ and $\iota$ is even, then $P_{DP}(\mathcal{H},k) < P(\mathcal{H},k).$
	\end{theorem}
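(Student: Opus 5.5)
The idea is to build a single explicit $k$-fold cover $\mathcal F$ of $\mathcal H$ with fewer than $P(\mathcal H,k)$ colorings. We take the natural cover on everything except one edge of the even cycle, and on that edge we use a twisted cover that works for $\mathcal H_2$ alone. This mirrors the Kaul--Mudrock argument for graphs, with the hypergraph counting supplied by Lemmas \ref{lem:2-3}, \ref{P_DP(C)} and \ref{lem1}.

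\textbf{Step 1: the twisted cover on $\mathcal H_2$.} Fix an edge $e$ of the unique cycle $C$ of $\mathcal H_2$ (of length $\iota$), and choose it not in $\mathcal H'$ whenever possible. Then $c(\mathcal H_2-e)=r-1$. Let $x,y$ be the two vertices of $e$ lying in the same component of $\mathcal H_2-e$. We use the natural cover on $\mathcal H_2-e$. On $e$ we take $k$ disjoint partial maps, none of which gives $x$ and $y$ the same color; for $k\ge 2$ they can be chosen pairwise disjoint. Lemma \ref{lem1} then gives exactly
\[
P(\mathcal H_2-e,k)-k\,t_2
\]
colorings. Combining Lemmas \ref{lem:2-1} and \ref{lem:2-3} with $\iota$ even, this equals the value $(k^{r-1}-1)^{m+\iota}-(k^{r-1}-1)^m$ from Lemma \ref{P_DP(C)}(2). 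That value is strictly smaller than $P(\mathcal H_2,k)=(k^{r-1}-1)^{m+\iota}+(k-1)(k^{r-1}-1)^m$ whenever $k\ge 2$, because the difference is $k(k^{r-1}-1)^m>0$.

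\textbf{Step 2: extend to $\mathcal H$.} We put the natural cover on $\mathcal H_1$ and on all edges of $\mathcal H_2$ other than $e$. For any coloring $g$ of the part of $\mathcal H$ coming from $\mathcal H_1$, we count the extensions to the rest of $\mathcal H_2$. Here we use the factorization hypothesis $P(\mathcal H_i,x)=P(\mathcal H',x)A_i(x)$ built into the definition of the sum family. The intended conclusion is a product formula
\[
P_{DP}(\mathcal H,\mathcal F)=\frac{P(\mathcal H_1,k)\,P_{DP}(\mathcal H_2,\mathcal F|_{\mathcal H_2})}{P(\mathcal H',k)},
\qquad
P(\mathcal H,k)=\frac{P(\mathcal H_1,k)\,P(\mathcal H_2,k)}{P(\mathcal H',k)}.
\]
To justify it, we argue that the number of extensions of a proper coloring of $\mathcal H'$ to $\mathcal H_2$ is the same for every such coloring, under both the natural cover and the twisted cover. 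For the twisted cover this requires applying relabeling symmetry (Lemma \ref{lem_nature}) to the acyclic part, so it is cleanest when $e\not\subseteq V(\mathcal H')$.

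\textbf{Step 3: conclude.} The hypothesis $k\ge\chi(\mathcal H_1)$ gives $P(\mathcal H_1,k)>0$. Together with the strict inequality from Step 1, this yields $P_{DP}(\mathcal H,\mathcal F)<P(\mathcal H,k)$. The hypothesis $k\ge\chi(\mathcal H_2)$ ensures $P(\mathcal H',k)>0$ and $k\ge 2$.

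\textbf{Main obstacle.} The hardest step is Step 2, the uniformity of extension counts across colorings of $\mathcal H'$ for the twisted cover. If $\mathcal H'$ is a single vertex, this holds by a color-permutation symmetry: we permute all colors simultaneously, which is possible because the natural cover together with a cyclic twist on $e$ is invariant under a suitable shift. In general we would instead avoid exact counting. Summing over colorings $g$ of $\mathcal H'$, we aim to show that the average number of twisted extensions is strictly smaller than the average number of natural extensions, and that the number of $\mathcal H_1$-extensions of $g$ is constant in $g$, which is what the factorization hypothesis is meant to supply. This gives the strict inequality without needing uniformity on the $\mathcal H_2$ side.
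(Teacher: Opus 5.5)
\textbf{The gap is in Step 2.} Your Step 1 is correct. With the natural cover on $\mathcal H_2-e$ and $k$ pairwise disjoint maps on $e$ separating $x$ and $y$, Lemma~\ref{lem1} gives $P(\mathcal H_2-e,k)-kt_2=(k^{r-1}-1)^{m+\iota}-(k^{r-1}-1)^m$, which is $k(k^{r-1}-1)^m$ below $P(\mathcal H_2,k)$.

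Your fallback for Step 2 does not close the gap. The sum-family hypothesis $P(\mathcal H_i,x)=P(\mathcal H',x)A_i(x)$ is only a divisibility statement about chromatic polynomials. It says nothing about how the colorings of your twisted cover of $\mathcal H_2$ are distributed over the colorings of $\mathcal H'$. Nor does it make the number of $\mathcal H_1$-extensions of a coloring $g$ of $\mathcal H'$ independent of $g$.

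For example, take $r=3$ and let $\mathcal H'$ be a single edge $\{a,b,c\}$. Let $\mathcal H_1$ consist of this edge, the $2$-edges $\{a,z\},\{z,b\}$, and a pendant edge $\{c,w_1,w_2\}$. Then $P(\mathcal H_1,x)=(x^3-x)(x-1)(x^2-x-1)$ is divisible by $P(\mathcal H',x)=x^3-x$, and so is $P(\mathcal H_2,x)$ when $m\ge 1$. Yet $g$ has $(k-1)(k^2-1)$ or $(k-2)(k^2-1)$ extensions to $\mathcal H_1$, according as $g(a)=g(b)$ or not. So the premise of your fallback is false.

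Uniformity on the $\mathcal H_2$ side also fails in general. Already when $\mathcal H'$ is a single cycle edge, the number of extensions depends on whether its two cycle vertices receive the same colour. Your cyclic-shift argument only covers the case where $\mathcal H'$ is a single vertex. Hence the product formula for $P_{DP}(\mathcal H,\mathcal F)$ is unsupported in the generality of the statement.

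\textbf{How the paper avoids this.} The missing idea is never to glue DP counts. The paper applies Lemma~\ref{prop1.1} to $\mathcal H$ itself, with an edge $e\in E(\mathcal H_2)\setminus E(\mathcal H_1)$ (implicitly on the cycle). That criterion says exactly that your twisted cover, now counted on all of $\mathcal H$ via Lemma~\ref{lem1}, has $P(\mathcal H-e,k)-kt_2<P(\mathcal H,k)$ colorings. Indeed, with $t_1,t_2$ computed in $\mathcal H$, this inequality is equivalent to $t_1<t_2$, i.e.\ to $(k^{r-1}-1)P(\mathcal H-e,k)<k^{r-1}P(\mathcal H,k)$.

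So only the chromatic polynomials $P(\mathcal H,k)$ and $P(\mathcal H-e,k)$ enter. The paper takes both to carry the same factor $P(\mathcal H_1,k)/A(k)$ from the sum-family setup. The inequality then reduces to $(k^{r-1}-1)P(\mathcal H_2-e,k)<k^{r-1}P(\mathcal H_2,k)$, whose difference is $k^{r-1}(k-1)(k^{r-1}-1)^m>0$ --- the same arithmetic as your Step 1.

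If you replace Step 2 by this global count, your outline becomes the paper's argument. Note that the paper still takes as given two things: the common-factor relation for $P(\mathcal H,k)$ and $P(\mathcal H-e,k)$, and the component hypothesis of Lemma~\ref{prop1.1} for $e$ inside $\mathcal H$.
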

	
	\begin{proof}
		
		Suppose that $e \in E(\mathcal{H}_2) \setminus E(\mathcal{H}_1)$. Since $\mathcal{H} \in \mathcal{H}_1 \bigoplus \mathcal{H}_2$, there exists a polynomial $A(k)$ such that
		$$
		P(\mathcal{H}-e, k)= \frac{P(\mathcal{H}_1,k)P(\mathcal{H}_2-e,k)}{A(k)}.
		$$
		and that 
		$$
		P(\mathcal{H},k) = \frac{P(\mathcal{H}_{1}, k)P(\mathcal{H}_2,k)}{A(k)}
		$$
		It follows from lemma~\ref{lem:2-1}and lemma~\ref{lem:2-3} that 
		$$
		P(\mathcal{H}-e, k)=\frac{k^{r-1}(k^{r-1}-1)^{m+\iota-1}P(\mathcal{H}_1,k)}{A(k)}
		$$
		and
		$$
		P(\mathcal{H},k)=\frac{\big((k^{r-1}-1)^{m+\iota}+(-1)^{\iota}(k-1)(k^{r-1}-1)^{m} \big)P(\mathcal{H}_1,k)}{A(k)}.
		$$
		When $\iota$ is even, we have that
		\begin{eqnarray*}
			\nonumber
			& &	\frac{k^{r-1}}{k^{r-1}-1}P(\mathcal{H},k)\\
			& = & \frac{k^{r-1}}{k^{r-1}-1}\frac{\big((k^{r-1}-1)^{m+\iota}+(-1)^{\iota}(k-1)(k^{r-1}-1)^{m}\big)P(\mathcal{H}_1,k)}{A(k)}		\\
			\nonumber
			& = & \big(k^{r-1}(k^{r-1}-1)^{m+\iota-1}+(-1)^{\iota}(k-1)k^{r-1}(k^{r-1}-1)^{m-1}\big) \frac{P(\mathcal{H}_1,k)}{A(k)} \\	
			& > & 
			\frac{k^{r-1}(k^{r-1}-1)^{m+\iota-1}P(\mathcal{H}_1,k)}{A(k)}\\
			&	= & P(\mathcal{H}-e, k).
		\end{eqnarray*}
		Thus, the result is clear from Theorem \ref{prop1.1}.
	\end{proof}
	
	Now we give two definitions which the first definition gives a natural way of splitting a cover of $\bigoplus^n_{i=1}(\mathcal{H}_i,p,r)$ into corresponding covers for each $\mathcal{H}_i$, and the second definition shows how to combine covers of $\mathcal{H}_i$ into a cover for a $\bigoplus^n_{i=1}(\mathcal{H}_i,p,r)$. 
	Whenever $\mathcal{F}$ is a $k$-fold cover of $\mathcal{H}$ and $\varPhi \subseteq \mathcal{F}$, we let $N(\varPhi, \mathcal{F})$ be the number of $\mathcal{F}$-colorings containing $\varPhi \subseteq \mathcal{F}$. 
	
	\begin{definition}
		Let $n\ge 2$, and let $\mathcal H_1,\mathcal H_2,\ldots,\mathcal H_n$ be vertex-disjoint hypergraphs. For each $i\in\brk n$, suppose that
		$$
		U_i=\{u_{i,1},u_{i,2},\ldots,u_{i,p}\}.
		$$
		 Let $\mathcal H$ be obtained by identifying $u_{1,j},u_{2,j},\ldots,u_{n,j}$ as a single vertex $u_j$ for each $j\in\brk p$.		
		For each $i\in\brk n$, let $\pi_i:V(\mathcal H_i)\rightarrow V(\mathcal H)$ be the natural identification map defined by
		$$
		\pi_i(x)=
		\begin{cases}
			u_j, & \text{if }x=u_{i,j}\text{ for some }j\in\brk p,\\
			x, & \text{otherwise}.
		\end{cases}
		$$
		Let $\mathcal F$ be a $k$-fold cover of $\mathcal H$. 
		For each $i\in\brk n$, the \emph{separated cover} of $\mathcal H_i$ induced by $\mathcal F$, denoted by $\mathcal F^{(i)}$, is defined as follows. 
		For each $e\in E(\mathcal H_i)$ and each map $\varphi\in\mathcal F_{\pi_i(e)}$, define the map $\varphi^{(\mathcal{H}_i)}$ on $e$ by
		$$
		\varphi^{(\mathcal{H}_i)}(x)=\varphi^{(\mathcal{H})}(\pi_i(x))\qquad\text{for every }x\in e.
		$$
		Then
		$$
		\mathcal F^{(i)}=\left\{\varphi^{(\mathcal{H}_i)}:\varphi\in\mathcal F_{\pi_i(e)},\;e\in E(\mathcal H_i)\right\}.
		$$
		We call $\mathcal F^{(i)}$ the separated $k$-fold cover of $\mathcal H_i$ obtained from $\mathcal F$.
	\end{definition}
	
	\begin{definition}\label{combine_covers}
		Let $n\ge 2$, and let $\mathcal H_1,\mathcal H_2,\ldots,\mathcal H_n$ be vertex-disjoint hypergraphs. For each $i\in\brk n$, let $\mathcal F^{(i)}$ be a $k$-fold cover of $\mathcal H_i$, and suppose that
		$$
		U_i=\{u_{i,1},u_{i,2},\ldots,u_{i,p}\}
		$$
		induces a copy of $K_p^r$ in $\mathcal H_i$.
		For each $2\le i\le n$, let
		$$
		g_i:\varphi_{\mathcal F^{(1)}}(U_1)\longrightarrow\varphi_{\mathcal F^{(i)}}(U_i)
		$$
		be a bijection, and set
		$$
		\mathbb G=(g_2,g_3,\ldots,g_n).
		$$
		Let $\mathcal H$ be the hypergraph obtained by identifying $u_{1,j},u_{2,j},\ldots,u_{n,j}$ as a single vertex $u_j$ for each $j\in\brk p$, and let $U=\{u_1,u_2,\ldots,u_p\}.$
		Let $g_1$ to be the identity map on $\varphi_{\mathcal F^{(1)}}(U_1)$. For each $i\in\brk n$, each $e\in E(\mathcal H_i)$, and each $\varphi\in\mathcal F^{(i)}_e$, let $\widehat{\varphi}$ denote the corresponding map on the image of $e$ in $\mathcal H$, obtained by preserving the colors outside $U$ and identifying the colors on $U_i$ according to $g_i$. More precisely,
		$$
		\widehat{\varphi}(x)=
		\begin{cases}
			\varphi(x), & \text{if } x\notin U,\\
			\left(g_i^{-1}\bigl(\varphi|_{U_i}\bigr)\right)(u_{1,j}), & \text{if } x=u_j\in U,
		\end{cases}
		$$
		where $u_{1,j}$ is identified with $u_j$ for each $j\in\brk p$.		
		The cover
		$$
		\mathcal F=\left\{\widehat{\varphi}:\varphi\in\mathcal F^{(i)}_e,\;e\in E(\mathcal H_i),\;i\in\brk n\right\}
		$$
		is called the \emph{$\mathbb G$-amalgamated $k$-fold cover} of $\mathcal H$ obtained from $\mathcal F^{(1)},\mathcal F^{(2)},\ldots,\mathcal F^{(n)}$.
	\end{definition}
	
	Let $\mathcal H_1$ and $\mathcal H_2$ be vertex-disjoint hypergraphs, and suppose that $\mathcal{H} = \bigoplus^2_{i=1}(\mathcal{H}_i,1,1)$. 
	Let $g_{2}: \varphi_{\mathcal{F}^{(1)}}(u_1) \to \varphi_{\mathcal{F}^{(2)}}(u_{2})$, where $g_{2}(\varphi^{(1)}_{\mathcal{F}^{(1)},1}(u_1))=\varphi^{(1)}_{\mathcal{F}^{(2)},2}(u_2)$ and $g_{2}(\varphi^{(1)}_{\mathcal{F}^{(1)},2}(u_1))=\varphi^{(1)}_{\mathcal{F}^{(2)},1}(u_2)$.
	Let $\mathbb{G} = \{g_2\}$, and below is an example of Definition \ref{combine_covers}.
	\medskip
	
	\begin{minipage}{\textwidth}
		\begin{minipage}[t]{0.48\textwidth}
			\centering
			\includegraphics[width=0.6\linewidth]{"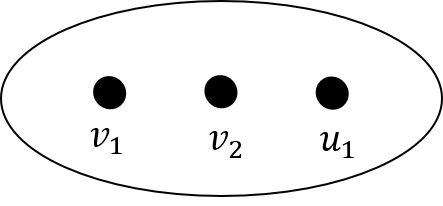"}
			
			\medskip
			
			\begin{tabular}{c c c c c}
				\hline
				& $v_1$ & $v_2$ & $u_1$ & $v_3$ \\
				\hline
				$\varphi^{(1)}_1$ & 1 & 1 & 1 &    \\
				$\varphi^{(1)}_2$ & 2 & 2 & 2 &   \\
				\hline
				$\varphi^{(2)}_1$ &  &  & 1 & 2   \\
				$\varphi^{(2)}_2$ &  &  & 2 & 1  \\
				\hline
			\end{tabular}
			\captionof{figure}{$\mathcal{H}_1$ and a $2$-fold cover of $\mathcal{H}_1$}
			\vspace{1em}
			
		\end{minipage}
		\hfill
		\begin{minipage}[t]{0.48\textwidth}
			\centering
			\includegraphics[width=0.6\linewidth]{"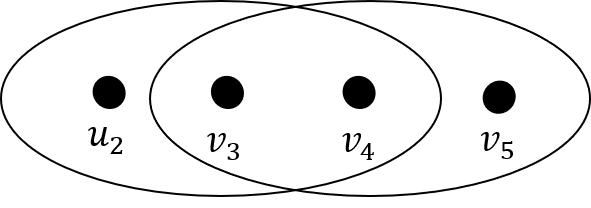"}
			
			\medskip
			
			\begin{tabular}{c c c c c}
				\hline
				& $u_2$ & $v_4$ & $v_5$ & $v_6$   \\
				\hline
				$\varphi^{(1)}_1$ & 1 & 1 & 2 &   \\
				$\varphi^{(1)}_2$ & 2 & 2 & 1 &   \\
				\hline
				$\varphi^{(1)}_1$ &  & 1 & 1 & 1  \\
				$\varphi^{(1)}_2$ &  & 2 & 2 & 2  \\
				\hline
			\end{tabular}
			\captionof{figure}{$\mathcal{H}_2$ and a $2$-fold cover of $\mathcal{H}_1$}
		\end{minipage}
		
		\begin{minipage}[t]{0.48\textwidth}
			\centering
			\includegraphics[width=0.8\linewidth]{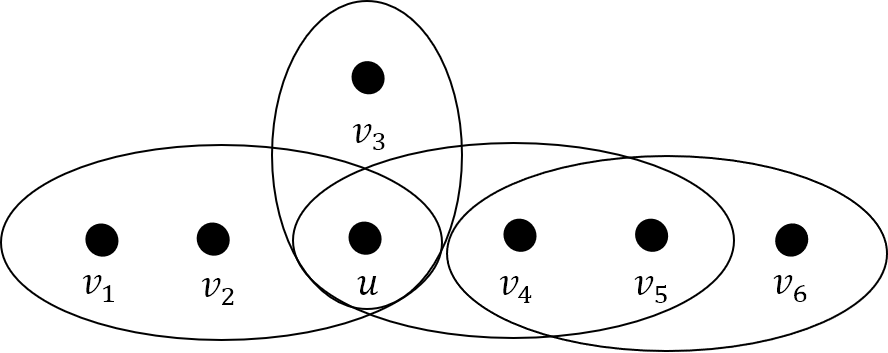}
			
			\medskip
			
			\begin{tabular}{c c c c c c c c}
				\hline
				& $v_1$ & $v_2$ & $u$& $v_3$ & $v_4$ & $v_5$ &$v_6$\\
				\hline
				$\varphi^{(1)}_1$ & 1 & 1 & 1 &  &  &  & \\
				$\varphi^{(1)}_2$ & 2 & 2 & 2 &  &  &  & \\
				\hline
			\end{tabular}
		\end{minipage}
		\hfill
		\begin{minipage}[t]{0.48\textwidth}
			\begin{tabular}{c c c c c c c c}
				\hline
				& $v_1$ & $v_2$ & $u$& $v_3$ & $v_4$ & $v_5$ &$v_6$\\
				\hline
				$\varphi^{(2)}_1$ &  &  & 1 & 2 &  &  & \\
				$\varphi^{(2)}_2$ &  &  & 2 & 1 &  &  & \\
				\hline
				$\varphi^{(3)}_1$ &  &  & 2 & & 1 & 2 &   \\
				$\varphi^{(3)}_2$ &  &  & 1 && 2 & 1 &   \\
				\hline
				$\varphi^{(4)}_1$ & &  &  &  & 1 & 1 & 1   \\
				$\varphi^{(4)}_2$ & & &  &  & 2 & 2 & 2   \\
				\hline
			\end{tabular}
		\end{minipage}
		\captionof{figure}{$\mathcal{H}$ and the $\mathbb{G}$-amalgamated-fold cover of $\mathcal{H}$}
		\label{example}
	\end{minipage}
	\begin{lemma}\label{lem_glue}
		Suppose that $ \mathcal{H}_1, \dots, \mathcal{H}_n$ are vertex disjoint hypergraphs for some $n \geq 2$ where $u_i \in V(\mathcal{H}_i)$ for each $i \in \brk{n} $. 
		Suppose that $\mathcal{F}^{(i)}$ is a $k$-fold cover of $\mathcal{H}_i$ for each $ i \in \brk{n}$. Suppose that $ g_i : \varphi^{(\mathcal{H}_1)}(u_1) \to \varphi^{(\mathcal{H}_i)}(u_i) $ is a bijection for each $ 2 \leq i \leq n $, and $ \mathbb{G} = (g_2, \dots, g_n) $. 
		Let $\mathcal{H}$ be the hypergraph obtained by identifying $ u_1, \dots, u_n $ as the same vertex $ u $. 
		Let $f: V(\mathcal{H}) \to \brk{k}$ and 
		$$ D = \sum_{j=1}^n N(f(u_1)=j, \mathcal{F}^{(1)}) \prod_{i=2}^n N(g_i(f(u_1)=j), \mathcal{F}^{(i)}).$$ 
		Then 
		$$ P_{DP}(\mathcal{H}, \mathcal{F}) = D $$ where $ \mathcal{F}$ is the $ \mathbb{G}$-amalgamated cover of $\mathcal{H}$ obtained from $ \mathcal{F}^{(1)}, \dots, \mathcal{F}^{(n)} $. 
		Consequently, 
		$$ P_{DP}(\mathcal{H}, k) \leq D.$$
	\end{lemma}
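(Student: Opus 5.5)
We prove the identity by decomposing $\mathcal F$-colorings of $\mathcal H$ according to the color received by the glued vertex $u$. The inequality $P_{DP}(\mathcal H,k)\le D$ then follows at once, since $\mathcal F$ is a $k$-fold cover of $\mathcal H$ and $P_{DP}(\mathcal H,k)$ is a minimum over all such covers. The index $j$ should run over $\brk{k}$, since $u$ takes one of $k$ colors; the upper limit $n$ in the statement must be read as $k$. Throughout, $N(g_i(f(u_1)=j),\mathcal F^{(i)})$ is read as the number of $\mathcal F^{(i)}$-colorings of $\mathcal H_i$ giving $u_i$ the color $g_i(j)$.

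First we check that the $\mathbb G$-amalgamated $\mathcal F$ is a $k$-fold cover of $\mathcal H$. Because $\mathcal H_1,\dots,\mathcal H_n$ are vertex disjoint and only $u_1,\dots,u_n$ are identified, every edge of $\mathcal H$ is $\pi_i(e)$ for a unique $i$ and a unique $e\in E(\mathcal H_i)$. So $\mathcal F_{\pi_i(e)}=\{\widehat\varphi:\varphi\in\mathcal F^{(i)}_e\}$. The map $\varphi\mapsto\widehat\varphi$ changes colors only at $u$, through the bijection $g_i^{-1}$, so it preserves disjointness of maps with the same domain.

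Next, for a fixed $j\in\brk{k}$, we build a bijection between the $\mathcal F$-colorings $f$ of $\mathcal H$ with $f(u)=j$ and the $n$-tuples $(f_1,\dots,f_n)$, where $f_i$ is an $\mathcal F^{(i)}$-coloring of $\mathcal H_i$ with $f_1(u_1)=j$ and $f_i(u_i)=g_i(j)$ for $i\ge 2$. Given $f$, set $f_i(x)=f(\pi_i(x))$ for $x\neq u_i$, and $f_i(u_i)=g_i(j)$ (with $g_1$ the identity). Conversely, the tuple glues back to a unique $f$, because the $f_i$ agree on the shared vertex after applying $g_i^{-1}$.

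The key check is avoidance. We have $\widehat\varphi\subseteq f$ if and only if $\varphi\subseteq f_i$, since $\widehat\varphi$ and $\varphi$ agree off $u$. At $u$, $\widehat\varphi(u)=g_i^{-1}(\varphi(u_i))$ equals $f(u)=j$ exactly when $\varphi(u_i)=g_i(j)=f_i(u_i)$. Hence $f$ avoids every map of $\mathcal F$ if and only if each $f_i$ avoids every map of $\mathcal F^{(i)}$. The colorings on different $\mathcal H_i$ are otherwise independent, because no edge meets two of the $\mathcal H_i$ outside $u$. Therefore the number of such $f$ is the product $N(f(u_1)=j,\mathcal F^{(1)})\prod_{i=2}^nN(g_i(j),\mathcal F^{(i)})$.

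Summing over $j\in\brk{k}$ gives $P_{DP}(\mathcal H,\mathcal F)=D$. The main obstacle is only this bookkeeping of how $g_i$ relabels colors at $u$; once the bijection of colorings is set up correctly, the rest is immediate.
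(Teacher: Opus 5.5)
Your proposal is correct and follows the paper's proof: you decompose by the color $j$ at $u$, biject the $\mathcal F$-colorings with $f(u)=j$ onto tuples $(f_1,\dots,f_n)$ with $f_1(u_1)=j$ and $f_i(u_i)=g_i(j)$, and sum the resulting products over $j\in\brk{k}$. Your explicit avoidance check at $u$ through $g_i^{-1}$, the verification that the amalgamated family is a $k$-fold cover, and reading the upper summation limit as $k$ (which is what the paper's proof actually uses) fill in details the paper leaves implicit.
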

	\begin{proof}
		Let $\mathcal F$ be the $\mathbb G$-amalgamated $k$-fold cover of $\mathcal H$ obtained from $\mathcal F^{(1)},\ldots,\mathcal F^{(n)}$. Clearly,
		$$
		P_{DP}(\mathcal H,\mathcal F)=\sum_{j=1}^{k}N(f(u)=j,\mathcal F).
		$$
		Fix $j\in\brk k$. For each $2\le i\le n$, let $z_i$ be the unique color of $u_i$ such that
		$$
		g_i(j)=z_i.
		$$
		Let $\mathcal I_1$ be the set of all $\mathcal F^{(1)}$-colorings $f_1$ of $\mathcal H_1$ satisfying $f_1(u_1)=j$. For each $2\le i\le n$, let $\mathcal I_i$ be the set of all $\mathcal F^{(i)}$-colorings $f_i$ of $\mathcal H_i$ satisfying $f_i(u_i)=z_i$.
		Define
		$$
		h:\mathcal I_1\times\mathcal I_2\times\cdots\times\mathcal I_n\longrightarrow\mathcal I
		$$
		by $h(f_1,f_2,\ldots,f_n)=f$, where
		$$
		f(u)=j
		$$
		and
		$$
		f(v)=f_i(v)\qquad\text{for every }v\in V(\mathcal H_i)\setminus\{u_i\}\text{ and }i\in\brk n.
		$$
		Since $z_i=g_i(j)$ for each $2\le i\le n$, the colors at $u_1,u_2,\ldots,u_n$ match under the maps in $\mathbb G$. After identifying $u_1,u_2,\ldots,u_n$ as $u$, they all correspond to the color $j$ at $u$. Hence $h$ is well defined.
		Moreover, $h$ is a bijection. For each $f\in\mathcal I$, the coloring $f_i$ is uniquely determined by $f$ on $V(\mathcal H_i)\setminus\{u_i\}$ and by the color of $u_i$. Here $f_1(u_1)=j$, and $f_i(u_i)=z_i$ for each $2\le i\le n$. Thus
		$$
		h^{-1}(f)=(f_1,f_2,\ldots,f_n),
		$$
		where
		$$
		f_1(u_1)=j,\qquad f_i(u_i)=z_i\quad\text{for }2\le i\le n,
		$$
		and
		$$
		f_i(v)=f(v)\qquad\text{for every }v\in V(\mathcal H_i)\setminus\{u_i\}.
		$$
		Consequently,
		$$
		N(f(u)=j,\mathcal F)=|\mathcal I|=\prod_{i=1}^{n}|\mathcal I_i|
		=N(f_1(u_1)=j,\mathcal F^{(1)})\prod_{i=2}^{n}N(f_i(u_i)=z_i,\mathcal F^{(i)}).
		$$
		Therefore, 
		$$P_{DP}(\mathcal{H}, \mathcal{F}) = D.$$
		This is clear that 
		$$P_{DP}(\mathcal{H}, k) \le D.$$
	\end{proof}
	
	The following result is a generalization of Theorem \ref{result 3} obtained by Becker et al.
	\begin{theorem}\label{th-2-1}
		Let $n\ge 2$, and let $\mathcal H_1,\ldots,\mathcal H_n$ be pairwise vertex-disjoint hypergraphs. Suppose that $\mathcal H\in\bigoplus_{i=1}^{n}(\mathcal H_i,1,1)$. Then, for every integer $k\ge 1$,
		$$
		P_{DP}(\mathcal H,k)\le \frac{\prod_{i=1}^{n}P_{DP}(\mathcal H_i,k)}{k^{n-1}}.
		$$
	\end{theorem}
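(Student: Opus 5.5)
The plan is to follow the averaging argument of Becker et al., adapted to the amalgamated covers of Definition~\ref{combine_covers} and to Lemma~\ref{lem_glue}. For each $i\in\brk n$, I would fix a $k$-fold cover $\mathcal F^{(i)}$ of $\mathcal H_i$ attaining the minimum, so that $P_{DP}(\mathcal H_i,\mathcal F^{(i)})=P_{DP}(\mathcal H_i,k)$. Let $u_i\in V(\mathcal H_i)$ be the glued vertex. For $j\in\brk k$, write $a_{i,j}=N(f(u_i)=j,\mathcal F^{(i)})$ for the number of $\mathcal F^{(i)}$-colorings in which $u_i$ gets color $j$. Then $\sum_{j=1}^k a_{i,j}=P_{DP}(\mathcal H_i,k)$. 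The case $k=1$ can be handled directly, or it falls out of the general argument since the bound becomes a product.

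For any choice of bijections $\mathbb G=(g_2,\dots,g_n)$, where each $g_i$ is a permutation of $\brk k$ identifying colors at $u_1$ with colors at $u_i$, Lemma~\ref{lem_glue} gives
\[
P_{DP}(\mathcal H,k)\le P_{DP}(\mathcal H,\mathcal F_{\mathbb G})=\sum_{j=1}^k a_{1,j}\prod_{i=2}^n a_{i,g_i(j)},
\]
where $\mathcal F_{\mathbb G}$ is the $\mathbb G$-amalgamated cover. Two checks are needed here. First, $\mathcal F_{\mathbb G}$ must be a genuine $k$-fold cover of $\mathcal H$: because only one vertex is identified, distinct maps on a common edge stay disjoint after relabeling at $u$. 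Second, $\mathcal H$ is, up to isomorphism, the one-vertex gluing in question, because the family $\bigoplus_{i=1}^n(\mathcal H_i,1,1)$ is determined by the choice of glued vertices.

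The key step is to average over $\mathbb G$. I would let each $g_i$ range independently over all $k$ cyclic shifts $j\mapsto j+t_i \pmod k$, giving $k^{n-1}$ choices of $\mathbb G$. Summing the bound over these choices gives
\[
\sum_{t_2,\dots,t_n}\sum_{j}a_{1,j}\prod_{i\ge2}a_{i,j+t_i}=\sum_j a_{1,j}\prod_{i\ge 2}\Bigl(\sum_{t_i}a_{i,j+t_i}\Bigr)=\prod_{i=1}^n P_{DP}(\mathcal H_i,k),
\]
because for fixed $j$, as $t_i$ ranges over $\mathbb Z_k$, the value $a_{i,j+t_i}$ runs over all $a_{i,\cdot}$ exactly once. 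So the average over the $k^{n-1}$ choices equals $\prod_i P_{DP}(\mathcal H_i,k)/k^{n-1}$, and some $\mathbb G$ attains at most the average. With Lemma~\ref{lem_glue}, this yields the claimed inequality.

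The main obstacle is not the averaging but making sure Lemma~\ref{lem_glue} applies cleanly. One has to confirm that the amalgamated family satisfies the disjointness condition in the definition of a $k$-fold cover. One also has to confirm that the color classes at $u_i$ are in bijection with $\brk k$, so that "$g_i$ is a bijection" can be read as a permutation of $\brk k$. If the covers $\mathcal F^{(i)}$ are not perfect, I would first complete them by adding partial maps, since adding maps can only decrease the number of colorings. The minimizing cover can therefore be taken perfect without loss of generality.
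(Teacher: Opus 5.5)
Your proposal is correct and follows the paper's argument. You fix minimizing covers, build amalgamated covers from cyclic shifts, apply Lemma~\ref{lem_glue}, and average. The only difference is that you average over all $k^{n-1}$ tuples of shifts at once, which works because the sum factorizes. The paper instead proves the case $n=2$ and then inducts by first gluing $\mathcal H_1,\dots,\mathcal H_{n-1}$ into $\mathcal H'$; both routes are equally valid.
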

	
	\begin{proof}
		Let $\mathcal{H}$ be the hypergraph obtained by identifying $ u_1, \dots, u_n $ as the same vertex $ u $, where $u_i \in V(\mathcal{H}_i)$ for each $i \in \brk{n}$.The proof is by induction on $n$.
		First suppose that $n=2$. For each $i\in\{1,2\}$, choose a $k$-fold cover $\mathcal F^{(i)}$ of $\mathcal H_i$ such that
		$$
		P_{DP}(\mathcal H_i,\mathcal F^{(i)})=P_{DP}(\mathcal H_i,k).
		$$
		For each $j\in\brk{k}$, set
		$$
		a_j=N(f(u_1)=j,\mathcal F_1),\qquad b_j=N(f(u_2)=j,\mathcal F_2).
		$$
		Then
		$$
		\sum_{j=1}^{k}a_j=P_{DP}(\mathcal H_1,k),\qquad 
		\sum_{j=1}^{k}b_j=P_{DP}(\mathcal H_2,k).
		$$
		
		For each $d\in\brk{k}$, define a bijection $g_d$ from the color set at $u_1$ to that at $u_2$ by
		$$
		g_d(j)=
		\begin{cases}
			j+d-1, & \text{if }j+d-1\le k,\\
			j+d-1-k, & \text{if }j+d-1>k,
		\end{cases}
		\qquad j\in\brk{k}.
		$$
		Let $\mathcal F_d$ be the $g_d$-amalgamated $k$-fold cover of $\mathcal H$ obtained from $\mathcal F^{(1)}$ and $\mathcal F^{(2)}$. By Lemma \ref{lem_glue},
		$$
		P_{DP}(\mathcal H,\mathcal F_d)=\sum_{j\in\brk{k}}a_jb_{g_d(j)}=:D_{g_d}.
		$$
		Thus $P_{DP}(\mathcal H,k)\le D_{g_d}$ for every $d\in\brk{k}$.
		Then,
		$$
		\begin{aligned}
			\sum_{d=1}^{k}D_{g_d}
			&=\sum_{d=1}^{k}\sum_{j=1}^{k}a_jb_{g_d(j)}\\
			&=\sum_{j=1}^{k}a_j\left(\sum_{d=1}^{k}b_{g_d(j)}\right)\\
			&=\left(\sum_{j=1}^{k}a_j\right)\left(\sum_{q=1}^{k}b_q\right)\\
			&=P_{DP}(\mathcal H_1,k)P_{DP}(\mathcal H_2,k).
		\end{aligned}
		$$
		Hence there exists $d_0\in\brk{k}$ such that
		$$
		P_{DP}(\mathcal H,k)\le D_{g_{d_0}}\le \frac{1}{k}\sum_{d\in\brk{k}}D_{g_d}=\frac{P_{DP}(\mathcal H_1,k)P_{DP}(\mathcal H_2,k)}{k}.
		$$
		This proves the case $n=2$.
		
		Now let $n\ge 3$, and suppose that the result holds for $n-1$ hypergraphs. Let $\mathcal H'$ be obtained from $\mathcal H_1,\ldots,\mathcal H_{n-1}$ by identifying $u_1,\ldots,u_{n-1}$ as a single vertex $u'$. By the induction hypothesis,
		$$
		P_{DP}(\mathcal H',k)\le \frac{\prod_{i=1}^{n-1}P_{DP}(\mathcal H_i,k)}{k^{n-2}}.
		$$
		Since $\mathcal H$ is obtained from $\mathcal H'$ and $\mathcal H_n$ by identifying $u'$ with $u_n$, the case $n=2$ gives
		$$
		P_{DP}(\mathcal H,k)\le \frac{P_{DP}(\mathcal H',k)P_{DP}(\mathcal H_n,k)}{k}\le \frac{\prod_{i=1}^{n}P_{DP}(\mathcal H_i,k)}{k^{n-1}}.
		$$
		This completes the proof.
	\end{proof}
	
	\begin{proposition}\label{DP-low}
		Let $\mathcal H=(X,E)$ be a hypergraph with $|X|=n$ and $|e|\ge 2$ for every $e\in E$. Fix an ordering $v_1,\ldots,v_n$ of $X$. For each $i\in\brk n$, let
		$$
		d_i=\left|\left\{e\in E:v_i\in e,\;e\setminus\{v_i\}\subseteq\{v_1,\ldots,v_{i-1}\}\right\}\right|,
		$$
		so that $d_i$ counts the edges whose last vertex in the ordering is $v_i$. Set $D=\max_{i\in\brk n}d_i$. Then, for every integer $k\ge D+1$,
		$$
		P_{DP}(\mathcal H,k)\ge \prod_{i=1}^{n}(k-d_i).
		$$
	\end{proposition}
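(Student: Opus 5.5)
We colour the vertices greedily in the order $v_1,\ldots,v_n$ and count the extensions at each step. Fix an arbitrary $k$-fold cover $\mathcal F$ of $\mathcal H$ with $k\ge D+1$. For $i\in\brk n$, let $E_i$ be the set of edges whose last vertex in the ordering is $v_i$, so $|E_i|=d_i$. Every edge of $\mathcal H$ lies in exactly one $E_i$, namely the $E_i$ for the largest-indexed vertex of the edge. A function $f:X\to\brk k$ is an $\mathcal F$-coloring if and only if, for every $i$ and every $e\in E_i$, no $\varphi\in\mathcal F_e$ satisfies $\varphi\subseteq f$. We show by induction on $i$ that the number of functions $f:\{v_1,\ldots,v_i\}\to\brk k$ avoiding every $\varphi\in\mathcal F_e$ with $e\in E_1\cup\cdots\cup E_i$ is at least $\prod_{l\le i}(k-d_l)$. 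The case $i=n$ gives $P_{DP}(\mathcal H,\mathcal F)\ge\prod_{i=1}^n(k-d_i)$, and since $\mathcal F$ is arbitrary the proposition follows.

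For the inductive step, take a valid partial coloring $f$ of $\{v_1,\ldots,v_{i-1}\}$ and count the colors $c$ at $v_i$ that keep it valid. Only edges in $E_i$ impose new constraints. For $e\in E_i$, $f\cup\{(v_i,c)\}$ contains some $\varphi\in\mathcal F_e$ exactly when $\varphi|_{e\setminus\{v_i\}}\subseteq f$ and $\varphi(v_i)=c$. This is the key step. The maps in $\mathcal F_e$ are pairwise disjoint as subsets of $X\times\brk k$, because they share the domain $e$. Since $|e|\ge 2$, the set $e\setminus\{v_i\}$ is nonempty, and disjointness gives that distinct maps in $\mathcal F_e$ take distinct values at every vertex of $e$, in particular at any fixed $w\in e\setminus\{v_i\}$. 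Hence at most one $\varphi\in\mathcal F_e$ satisfies $\varphi(w)=f(w)$, so at most one $\varphi$ is consistent with $f$ on $e\setminus\{v_i\}$. Each edge of $E_i$ therefore forbids at most one color at $v_i$.

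It follows that at least $k-d_i\ge k-D\ge 1$ colors extend $f$ validly. Distinct pairs $(f,c)$ give distinct colorings of $\{v_1,\ldots,v_i\}$, so summing over all valid $f$ gives the product bound $\prod_{l\le i}(k-d_l)$. The argument never uses that $\mathcal F$ is perfect, so it covers all $k$-fold covers. The only delicate point is the claim that each edge forbids at most one color, and it holds precisely because $|e|\ge 2$ and maps on a common domain are disjoint. The hypothesis $k\ge D+1$ only makes each factor $k-d_i$ positive; without it the bound would be trivial or meaningless.
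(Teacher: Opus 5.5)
Your proof is correct and follows the paper's approach: greedy coloring in the order $v_1,\ldots,v_n$, where each edge whose last vertex is $v_i$ forbids at most one color at $v_i$ because maps in $\mathcal F_e$ are pairwise disjoint, so there are at least $k-d_i$ choices at each step. You also make explicit the induction, the role of $|e|\ge 2$, and the fact that perfectness of the cover is never used.
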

	
	\begin{proof}
		Let $\mathcal F$ be an arbitrary perfect $k$-fold cover of $\mathcal H$. We color the vertices in the order $v_1,\ldots,v_n$. We now color the vertices by the order.
		
		Under the ordering $v_1,\ldots,v_n$, the color of $v_1$ may be chosen arbitrarily.
		Fix $i\in\brk n$, and suppose that $v_1,\ldots,v_{i-1}$ have already been colored. For each edge $e$ counted by $d_i$, all vertices in $e\setminus\{v_i\}$ have already been colored before we color $v_i$. 
		Since the maps in $\mathcal F_e$ are pairwise disjoint, at most one map $\varphi\in\mathcal F_e$ agrees with the colors already assigned to $e\setminus\{v_i\}$. If such a map exists, we avoid it by choosing $f(v_i)\ne\varphi(v_i)$. Thus, each edge counted by $d_i$ forbids at most one color for $v_i$.
		
		Since there are exactly $d_i$ such edges, at most $d_i$ colors are forbidden for $v_i$. 
		Thus, at least $k-d_i$ colors remain available for $v_i$. 
		This holds for every partial coloring obtained in the process.
		Every edge is checked when its last vertex is colored. 
		Therefore, every complete coloring produced in this way is an $\mathcal F$-coloring. 
		Since different choices give different colorings, the total number of $\mathcal F$-colorings is at least $\prod_{i=1}^{n}(k-d_i).$
		Taking the minimum over all perfect $k$-fold covers of $\mathcal H$ gives the desired result.
	\end{proof}
	
	\begin{lemma}\label{lemma4-2}
		Let $n\ge 2$, and let $\mathcal H_1,\mathcal H_2,\ldots,\mathcal H_n$ be pairwise vertex-disjoint hypergraphs. For each $i\in\brk n$, suppose that
		$$
		U_i=\{u_{i,1},u_{i,2},\ldots,u_{i,p}\}
		$$
		induces a copy of $K_p^r$ in $\mathcal H_i$, where $p\ge r$. Let $\mathcal H\in\bigoplus_{i=1}^{n}(\mathcal H_i,p,r)$ be obtained by identifying $u_{1,j},u_{2,j},\ldots,u_{n,j}$ as a single vertex $u_j$ for each $j\in\brk p$, and let
		$$
		U=\{u_1,u_2,\ldots,u_p\}.
		$$
		
		Suppose that for each $i\in\brk n$, every $k$-fold cover $\mathcal F_{\mathcal H_i}$ of $\mathcal H_i$ satisfies
		$$
		N(f_i,\mathcal F_{\mathcal H_i})\ge m_i
		$$
		for every $\mathcal F_{\mathcal H_i}$-coloring $f_i$ of $U_i$. If
		$$
		k\ge \binom{p-1}{r-1}+1,
		$$
		then
		$$
		P_{DP}(\mathcal H,k)\ge
		\left(\prod_{q=1}^{p}(k-d_q)\right)
		\left(\prod_{i=1}^{n}m_i\right),
		$$
		where
		$$
		d_q=
		\begin{cases}
			0, & \text{if }q\in\brk{r-1},\\
			\binom{q-1}{r-1}, & \text{if }q\in\brk p\setminus\brk{r-1}.
		\end{cases}
		$$
	\end{lemma}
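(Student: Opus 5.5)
Fix an arbitrary $k$-fold cover $\mathcal F$ of $\mathcal H$. The plan is to count $\mathcal F$-colorings in two stages: first color the glued set $U$, then extend independently into each piece $\mathcal H_i$. Every $\mathcal F$-coloring $f$ of $\mathcal H$ restricts to a coloring $f|_U$ of $U$ that avoids all maps of $\mathcal F$ whose domain lies in $U$, i.e. an $\mathcal F$-coloring of the copy of $K_p^r$ on $U$. Hence
$$
P_{DP}(\mathcal H,\mathcal F)=\sum_{c}N(c,\mathcal F),
$$
where $c$ ranges over the $\mathcal F$-colorings of $K_p^r[U]$. The two factors of the bound then come from two separate estimates: the number of admissible $c$ is at least $\prod_{q=1}^{p}(k-d_q)$, and for each such $c$ we have $N(c,\mathcal F)\ge\prod_{i=1}^n m_i$.

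For the first factor, apply Proposition~\ref{DP-low} to the restriction of $\mathcal F$ to $K_p^r$ on $U$ with the ordering $u_1,\ldots,u_p$. The edges whose last vertex is $u_q$ are the $r$-subsets containing $u_q$ together with $r-1$ vertices from $\{u_1,\ldots,u_{q-1}\}$. There are $\binom{q-1}{r-1}$ of them, which is $0$ when $q\le r-1$. This matches $d_q$, and $D=\binom{p-1}{r-1}$. The hypothesis $k\ge\binom{p-1}{r-1}+1$ is exactly what Proposition~\ref{DP-low} requires, so it gives at least $\prod_q(k-d_q)$ colorings of $U$. The proposition is stated for perfect covers. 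If the restriction of $\mathcal F$ to $U$ is not perfect, I would pad it with extra disjoint maps, which only removes colorings, so the bound still holds.

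For the second factor, fix $c$ and use the separated covers $\mathcal F^{(i)}$ of the $\mathcal H_i$. Each edge of $\mathcal H$ lies in the image of some $\mathcal H_i$, and the edges within $U$ were already handled by $c$. So, as in the bijection $h$ in the proof of Lemma~\ref{lem_glue}, extensions of $c$ to $\mathcal H$ correspond bijectively to tuples $(f_1,\ldots,f_n)$ in which $f_i$ is an $\mathcal F^{(i)}$-coloring of $\mathcal H_i$ agreeing with $c\circ\pi_i$ on $U_i$. The reason is that the vertex sets $V(\mathcal H_i)\setminus U_i$ are pairwise disjoint, and each edge constraint involves only one piece together with $U$. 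This gives
$$
N(c,\mathcal F)=\prod_{i=1}^{n}N(c\circ\pi_i,\mathcal F^{(i)})\ge\prod_i m_i.
$$
Here the hypothesis is applied to $\mathcal F^{(i)}$, using that $c\circ\pi_i$ is an $\mathcal F^{(i)}$-coloring of $U_i$.

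The main obstacle is checking that $\mathcal F^{(i)}$ is a legitimate $k$-fold cover of $\mathcal H_i$ to which the hypothesis on $m_i$ applies. A subtle point is an edge inside $U$ that lies in several $\mathcal H_i$: its maps are shared, and each $\mathcal F^{(i)}$ must receive the same maps there. This requires identifying edges of $\mathcal H$ inside $U$ across the pieces rather than creating multi-edges. Once that is settled, the product formula is immediate, and summing over $c$ gives
$$
P_{DP}(\mathcal H,\mathcal F)\ge\prod_q(k-d_q)\prod_i m_i.
$$
Since $\mathcal F$ was arbitrary, taking the minimum over all $k$-fold covers yields the stated bound on $P_{DP}(\mathcal H,k)$.
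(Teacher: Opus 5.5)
Your proposal is correct and follows essentially the same route as the paper: it splits $P_{DP}(\mathcal H,\mathcal F)$ over the $\mathcal F$-colorings of the copy of $K_p^r$ on $U$, bounds their number by Proposition~\ref{DP-low} with the ordering $u_1,\ldots,u_p$, and bounds each fiber by $\prod_i m_i$ through the product bijection with extensions in the separated covers $\mathcal F^{(i)}$. Your extra remarks are handled correctly, and the paper passes over both silently: the padding for a possibly non-perfect restriction, and the identification of shared edges inside $U$ (using simplicity of $\mathcal H$ and injectivity of $\pi_i$) so that each $\mathcal F^{(i)}$ is a genuine $k$-fold cover.
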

	
	\begin{proof}
		Let $\mathcal F$ be an arbitrary $k$-fold cover of $\mathcal H$, and for each $i\in\brk n$, let $\mathcal F^{(i)}$ be the separated $k$-fold cover of $\mathcal H_i$ obtained from $\mathcal F$.
		
		Let $\mathcal F_U$ be the restriction of $\mathcal F$ to the copy of $K_p^r$ induced by $U$. Fix an $\mathcal F_U$-coloring $f_U$ of $U$. For each $i\in\brk n$, define the corresponding coloring $f_i$ of $U_i$ by
		$$
		f_i(u_{i,q})=f_U(u_q)
		$$
		for every $q\in\brk p$.
		
		For each $i\in\brk n$, let $\mathcal I_i$ be the set of all $\mathcal F^{(i)}$-colorings of $\mathcal H_i$ containing $f_i$, and let $\mathcal I$ be the set of all $\mathcal F$-colorings of $\mathcal H$ containing $f_U$. By the assumption,
		$$
		|\mathcal I_i|=N(f_i,\mathcal F^{(i)})\ge m_i
		$$
		for every $i\in\brk n$.
		
		Define
		$$
		h:\mathcal I_1\times\mathcal I_2\times\cdots\times\mathcal I_n\longrightarrow\mathcal I
		$$
		by $h(f_1,\ldots,f_n)=f$, where
		$$
		f(u_j)=f_U(u_j)
		$$
		for every $j\in\brk p$, and
		$$
		f(v)=f_i(v)
		$$
		for every $v\in V(\mathcal H_i)\setminus U_i$ and $i\in\brk n$.
		
		The map $h$ is well defined, since all the colorings $f_i$ agree on the identified vertices through $f_U$. Moreover, $h$ is a bijection. Indeed, every coloring in $\mathcal I$ restricts uniquely to a coloring in $\mathcal I_i$ for each $i\in\brk n$, and conversely any choice of colorings from $\mathcal I_1,\ldots,\mathcal I_n$ determines a unique coloring in $\mathcal I$. Hence
		$$
		N(f_U,\mathcal F)=|\mathcal I|=\prod_{i=1}^{n}|\mathcal I_i|\ge \prod_{i=1}^{n}m_i.
		$$
		
		Now order the vertices of $U$ as $u_1,u_2,\ldots,u_p$. Since $U$ induces a copy of $K_p^r$, the number of edges whose last vertex in this ordering is $u_q$ is
		$$
		d_q=
		\begin{cases}
			0, & \text{if }q\in\brk{r-1},\\
			\binom{q-1}{r-1}, & \text{if }q\in\brk p\setminus\brk{r-1}.
		\end{cases}
		$$
		By Proposition \ref{DP-low}, the number of $\mathcal F_U$-colorings of $U$ is at least
		$$
		\prod_{q=1}^{p}(k-d_q).
		$$
		
		For each such coloring $f_U$, there are at least $\prod_{i=1}^{n}m_i$ $\mathcal F$-colorings of $\mathcal H$ containing $f_U$. Since distinct colorings of $U$ give disjoint sets of $\mathcal F$-colorings of $\mathcal H$, we obtain
		$$
		P_{DP}(\mathcal H,\mathcal F)\ge
		\left(\prod_{q=1}^{p}(k-d_q)\right)
		\left(\prod_{i=1}^{n}m_i\right).
		$$
		Since $\mathcal F$ was arbitrary, it follows that
		$$
		P_{DP}(\mathcal H,k)\ge
		\left(\prod_{q=1}^{p}(k-d_q)\right)
		\left(\prod_{i=1}^{n}m_i\right).
		$$
	\end{proof}
	
	From Theorem \ref{th-2-1} and lemma \ref{lemma4-2}, we can easily get the corollary.
	\begin{corollary}
		Suppose that $ \mathcal{H}_1, \dots, \mathcal{H}_n$ are vertex disjoint hypergraphs for some $n \geq 2$ where $u_i \in V(\mathcal{H}_i)$ for each $i \in \brk{n} $. 
		Let $\mathcal{H} \in \bigoplus_{i=1}^{n}(\mathcal{H}_i,1,1)$.
		For $i\in \brk{n}$, let $\mathcal{F}_{\mathcal{H}_i}$ be an arbitrary $k$-fold cover of $\mathcal{H}_i$ satisfying $N(f(u_i)=s,\mathcal{F}_{\mathcal{H}_i}) \geq P_{DP}(\mathcal{H}_i,k)/k$, where $s \in \brk{k}$. Then
		$$
		P_{DP}(\mathcal{H},k)=\frac{\prod_{i=1}^{n} P_{DP}(\mathcal{H}_i,k)}{k^{n-1}}
		$$
	\end{corollary}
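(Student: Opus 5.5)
The equality will come from two matching inequalities: the upper bound from Theorem \ref{th-2-1} and a lower bound obtained by the product-counting argument of Lemma \ref{lemma4-2}, specialised to a single glued vertex ($p=r=1$).

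\textbf{Upper bound.} Theorem \ref{th-2-1} applies directly, since $\mathcal H\in\bigoplus_{i=1}^n(\mathcal H_i,1,1)$. It gives
$$P_{DP}(\mathcal H,k)\le \prod_{i=1}^n P_{DP}(\mathcal H_i,k)/k^{n-1}.$$

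\textbf{Lower bound.} Let $\mathcal F$ be an arbitrary $k$-fold cover of $\mathcal H$, and let $\mathcal F^{(i)}$ be the separated cover of $\mathcal H_i$ obtained from $\mathcal F$. Since no edge of $\mathcal H$ meets two distinct $\mathcal H_i$ except through $u$, the bijection argument of Lemma \ref{lemma4-2} (equivalently Lemma \ref{lem_glue} with all $g_i$ the identity) gives, for each colour $s\in\brk k$ at $u$,
$$N(f(u)=s,\mathcal F)=\prod_{i=1}^n N(f(u_i)=s,\mathcal F^{(i)}).$$
Here a single vertex carries no edges, so the factor $\prod_q(k-d_q)$ in Lemma \ref{lemma4-2} is simply $k$: every colour of $u$ is admissible.

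The hypothesis is read as saying that every $k$-fold cover of $\mathcal H_i$, in particular $\mathcal F^{(i)}$, satisfies $N(f(u_i)=s,\cdot)\ge P_{DP}(\mathcal H_i,k)/k$ for every $s$. This is exactly the role of $m_i$ in Lemma \ref{lemma4-2}, with $m_i=P_{DP}(\mathcal H_i,k)/k$. Summing over $s$ then gives
$$P_{DP}(\mathcal H,\mathcal F)\ge k\prod_{i=1}^n \frac{P_{DP}(\mathcal H_i,k)}{k}=\frac{\prod_{i=1}^n P_{DP}(\mathcal H_i,k)}{k^{n-1}}.$$
Taking the minimum over $\mathcal F$ and combining with the upper bound yields the claimed equality.

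\textbf{Main obstacle.} The delicate step is interpreting the hypothesis. It must hold for \emph{every} cover of $\mathcal H_i$, or at least for every separated cover $\mathcal F^{(i)}$ that arises from some cover of $\mathcal H$, and not just for one chosen cover. Otherwise the lower bound only controls particular covers of $\mathcal H$, not the minimum. I would state explicitly that the condition is uniform over all $k$-fold covers of $\mathcal H_i$, as in the phrase ``an arbitrary $k$-fold cover''. I would also check that the separated cover is a genuine $k$-fold cover of $\mathcal H_i$, which holds because gluing at a single vertex creates no new edges. Once these points are settled, the remaining steps are routine.
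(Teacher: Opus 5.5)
Your proposal is correct and follows the paper's route: the upper bound is Theorem~\ref{th-2-1}, and the lower bound is the product-counting argument of Lemma~\ref{lemma4-2} with $m_i=P_{DP}(\mathcal H_i,k)/k$, with the hypothesis read as holding for every $k$-fold cover of $\mathcal H_i$. That uniform reading is indeed necessary: the existential reading is vacuous, because the natural cover always gives $N(f(u_i)=s,\iota_{\mathcal H_i}(k))=P(\mathcal H_i,k)/k\ge P_{DP}(\mathcal H_i,k)/k$. You are also right to redo the count directly with the factor $k$ rather than quote the lemma's formula. Setting $p=r=1$ there literally gives $d_1=\binom{0}{0}=1$ and hence only $(k-1)\prod_{i=1}^n m_i$, a point the paper's one-line justification passes over.
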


	\vskip 5mm
	\noindent{\bf Acknowledgements }
	\vskip 5mm
	
	This work is supported by National Natural Science Foundation of
	China under Grant
	No. 12371340.

\end{document}